\pdfoutput=1
\documentclass[12pt]{amsart}
\usepackage{epsfig,amssymb,pdfsync,color}

\headheight=6.15pt \textheight=8.75in \textwidth=6.5in
\oddsidemargin=0in \evensidemargin=0in \topmargin=0in




\newtheorem{Thm}[equation]{Theorem}
\newtheorem{Cor}[equation]{Corollary}
\newtheorem{Lem}[equation]{Lemma}
\newtheorem{Pro}[equation]{Proposition}

\theoremstyle{definition}

\newtheorem{Def}[equation]{Definition}

\theoremstyle{remark}

\newtheorem{Rem}[equation]{Remark}

\numberwithin{equation}{section}
\makeatletter
\renewcommand{\c@figure}{\c@equation}
\makeatother

\setcounter{section}{-1}

\newcommand{\maths}[1]{{\bf #1}}

\newcommand{\T}{{\mathcal T}}

\newcommand{\RR}{\maths{R}}
\newcommand{\NN}{\maths{N}}

\renewcommand{\SS}{\maths{S}}

\newcommand{\ra}{\rightarrow}

\newcommand{\bord}{\partial}

\newcounter{fig}



\def
\addtocounter{fig}{1}
\[
\begin{array}{c}
\mbox{\input{#.pstex_t}}\\
\\
\hbox{\rm Figure \arabic{fig} ~: #.}
\end{array}
\]
2#3{
\addtocounter{fig}{1}
\[
\begin{array}{c}
\mbox{\input{#1.pstex_t}}\\
\\
\hbox{\rm Figure \arabic{fig} ~: #3.}
\end{array}
\]
}





\newcommand{\ack}{\noindent{\bf Acknowledgement.}}

\begin{document}

\title[Dirichlet Problem]{Boundary Value Problems on Planar Graphs and Flat Surfaces with integer cone singularities, I: The Dirichlet Problem}

\author{Sa'ar Hersonsky}

\address{Department of Mathematics\\ 
University of Georgia\\ 
Athens, GA 30602}

\urladdr{http://www.math.uga.edu/~saarh}
\email{saarh@math.uga.edu}
\thanks{}
\date{\today; version 0.0526100}

\begin{abstract}
Consider a planar, bounded, $m$-connected region $\Omega$, and let $\bord\Omega$ be its boundary.  Let $\mathcal{T}$ be a cellular decomposition of $\Omega\cup\bord\Omega$, where each $2$-cell is either a triangle or a quadrilateral. From these data and a conductance function we construct a canonical pair $(S_{},f)$ where $S$ is a genus $(m-1)$ {\it singular flat surface} tiled by rectangles and $f$ is an energy preserving mapping from ${\mathcal T}^{(1)}$ onto $S_{}$. 
By a singular flat surface, we will mean a surface which carries a metric structure locally modeled on the Euclidean plane, except at a finite number of points. These points have cone singularities, and the cone angle is allowed to take any positive value (see for instance \cite{Tro} for an excellent survey).  
Our realization may be considered as a discrete uniformization of planar bounded regions. 
\end{abstract}

\maketitle

\section{Introduction}
\label{se:Intro}

Consider a planar, bounded, $m$-connected region $\Omega$, and let $\bord\Omega$ be its boundary.  Let $\mathcal{T}$ be a cellular decomposition of $\Omega\cup\bord\Omega$, where each cell is either a triangle or a quadrilateral. Let $\bord\Omega=E_1\sqcup E_2$, where $E_1$ is the outermost component of $\bord\Omega$. Invoke a {\it conductance function} on ${\mathcal T^{(1)}}$, making it a {\it finite network}, and use it to define a combintorial Laplacian $\Delta$ on ${\mathcal T}^{(0)}$. Let $k$ be a positive constant. Let $g$ be the solution of a {\it Dirichlet boundary value problem} (D-BVP) defined on ${\mathcal T}^{(0)}$ and  determined by  requiring that $g|_{E_1}=k, g|_{E_2}=0$ and
$\Delta g=0$ at every interior vertex of ${\mathcal T}^{(0)}$, i.e. $g$ is {\it combinatorially  harmonic}. 
Furthermore, let $E(g)$ be the {\it Dirichlet energy} of $g$. Following the notation in \cite{ChGrYa}, let $\frac{\bord g}{\bord n}(x)$ denote the {\it normal derivative} of $g$ at the vertex $x\in\bord \Omega$.  Our first result is:

 \begin{Thm}  {\rm (Discrete uniformization of a pair of pants)} 
 \label{Th:pair} Let ${\mathcal P}$ be a bounded,  triply connected planar region and let  $(\Omega,\bord\Omega,{\mathcal T})= ({\mathcal P},\bord{\mathcal P}=E_1\sqcup E_2,{\mathcal T})$ where $E_2=E_2^1\sqcup E_2^2$.   Let $S_{\mathcal P}$ be a  singular flat pair of pants with exactly one singular point $Q$ having $4 \pi$ as its cone angle such that 
 \begin{enumerate}
 \item ${\mbox{\rm Length}_{\rm Euclidean}}({S_{\mathcal P}})_{E_1}= \ \ \ \sum_{x\in E_1}\frac{\bord g}{\bord n}(x)$, 
 \item ${\mbox{\rm Length}_{\rm Euclidean}}({S_{\mathcal P}})_{E_2^{1}}=- \sum_{x\in E_2^1}\frac{\bord g}{\bord n}(x)$, 
 \item ${\mbox{\rm Length}_{\rm Euclidean}}({S_{\mathcal P}})_{E_2^2}=-\sum_{x\in E_2^2}\frac{\bord g}{\bord n}(x)$, and
 \item the Euclidean length of a shortest geodesic connecting $E_1$ to either $E_2^1$ or $E_2^2$ is $k$, 
  \end{enumerate} 
 where $({S_{\mathcal P}})_{E_1}, ({S_{\mathcal P}})_{E_2^{1}}$ and $({S_{\mathcal P}})_{E_2^{2}}$ are the boundary components of $S_{\mathcal P}$.
  Then there exists
 a mapping $f$ which associates to each edge in  ${\mathcal T}^{(1)}$ a unique  Euclidean rectangle in $S_{\mathcal P}$,  in such a way that the collection of these rectangles forms a tiling of $S_{\mathcal P}$. 
 Furthermore, $f$ is energy preserving in the sense that $E(g)= {\rm Area}(S_{\mathcal P})$, and $f$ is boundary preserving.
 \end{Thm}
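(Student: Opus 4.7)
The plan is to mimic, in the discrete setting, the classical conformal uniformization of a pair of pants. In the continuous case the harmonic $g$ with $g|_{E_1}=k$ and $g|_{E_2}=0$ has a unique non-degenerate critical point $Q$ of saddle type inside $\mathcal{P}$, and the level set $L^{*}=g^{-1}(g(Q))$ is a figure-eight which separates $\mathcal{P}$ into two annular regions $\mathcal{A}_1,\mathcal{A}_2$, each containing one component of $E_2$. On each $\mathcal{A}_i$ the map $g+ig^{*}$, with $g^{*}$ a locally defined harmonic conjugate, sends $\mathcal{A}_i$ biholomorphically onto a Euclidean rectangle, and gluing the two rectangles along the images of $L^{*}$ produces a singular flat surface with a single cone point of angle $4\pi$ at the image of $Q$. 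My goal is to carry out each of these steps combinatorially on $\mathcal{T}$.

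First I would isolate a discrete saddle by setting $c^{*}=\inf\{c\in(0,k):\{g\leq c\}\text{ is connected in }\mathcal{T}\}$. A discrete maximum-principle argument, together with an analysis of how the topology of sub-level sets of a harmonic function can change as $c$ varies, should show that there is a distinguished interior vertex $Q$ (after a generic small perturbation of the conductance, if needed, to break ties) at which this topological change occurs, and that for $c\neq c^{*}$ the level curves $\{g=c\}$ consist of one or two simple closed curves isotopic to the appropriate components of $\bord\Omega$, while $\{g=c^{*}\}$ is a figure-eight through $Q$.

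Next I would cut $\mathcal{T}$ along this figure-eight and along a discrete gradient path from $Q$ to $E_1$, decomposing $\mathcal{P}$ into two combinatorial annuli $\mathcal{T}_1,\mathcal{T}_2$, each bounded on one side by a piece of $E_1$ and on the other by one of $E_2^1,E_2^2$. On each annulus I would invoke the doubly connected Dirichlet construction (the discrete Brooks--Smith--Stone--Tutte square-tiling procedure, implemented with a single-valued harmonic conjugate $g^{*}$ on a slit version of $\mathcal{T}_i$) to produce a Euclidean rectangle $R_i$ of width $k$ and height equal to the total current across $E_2^i$, tiled by rectangles $f(e)$, one for every edge $e=[u,v]\in\mathcal{T}^{(1)}$, with horizontal side $|g(u)-g(v)|$ and vertical side $c(e)|g(u)-g(v)|$. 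Re-gluing $R_1$ and $R_2$ along the boundary arcs that originate from the figure-eight and the gradient cut to $E_1$ produces $S_{\mathcal{P}}$; because the four sectors of $\mathcal{P}$ around $Q$ each contribute an angle of $\pi$, the image of $Q$ carries a cone angle of exactly $4\pi$.

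The hard part is the combinatorial Morse theory in the first step: proving existence and uniqueness of $Q$ (or reducing to that case by perturbing the conductance), and choosing the discrete gradient path from $Q$ to $E_1$ so that it stays monotone in $g$ and does not collide with itself or with the level curves. Once the decomposition into annuli is in place, the remaining verifications are routine. Kirchhoff's law gives the boundary length equalities (1)--(3), since the total current flowing out of $E_1$ equals $\sum_{x\in E_1}\frac{\bord g}{\bord n}(x)$ and splits across the two annuli into the currents crossing $E_2^1$ and $E_2^2$. Condition (4) is immediate from the fact that each $R_i$ has horizontal extent $k$ and that horizontal segments realise the distance between the two components of $\bord R_i$. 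Energy preservation follows because the rectangle $f(e)$ attached to $e=[u,v]$ has area $c(e)(g(u)-g(v))^2$, and summing over $e$ yields $E(g)=\mathrm{Area}(S_{\mathcal{P}})$. Boundary preservation is built into the construction.
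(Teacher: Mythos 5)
There is a genuine gap, and it sits exactly where you say the ``hard part'' is. First, the decomposition you describe is not what your cuts produce: cutting along the figure-eight \emph{and} a single arc from $Q$ to $E_1$ yields \emph{three} pieces --- two annuli bounded by the lobes of the figure-eight and $E_2^1$, $E_2^2$ respectively (neither of which touches $E_1$), and one disk obtained by slitting the outer region open. It does not yield two annuli each bounded by a piece of $E_1$ and one of $E_2^1,E_2^2$; for that you would have to cut along the \emph{two} ascending separatrices at $Q$ and not along the figure-eight at all. Consequently your claim that each piece maps to a rectangle of width $k$ (so that condition (4) is ``immediate'') does not apply to the pieces you actually have, whose $g$-ranges are $[0,g(Q)]$ and $[g(Q),k]$.

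Second, and more fundamentally, even after repairing the decomposition, the discrete implementation is unjustified. A ``discrete gradient path'' from $Q$ to $E_1$ that is monotone in $g$ and runs along edges of ${\mathcal T}^{(1)}$ need not exist, and, unlike in the continuum, an edge path is generically \emph{not} a no-flux curve: the current across it does not vanish, so the two copies of the cut do not develop onto straight vertical segments of equal length and the slit domain does not close up into a $k\times l_i$ rectangle. Moreover, on your pieces the non-$E_2^i$ boundary carries \emph{mixed} data ($g=k$ on the $E_1$-arc, $g$ non-constant on the cut arcs), so the annulus machinery (Theorem~\ref{Th:annulus}, or the BSST construction) with two constant boundary values cannot be invoked as a black box, and a single-valued discrete harmonic conjugate on the slit complex is never constructed. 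The paper avoids all of this deliberately: it cuts only along the singular \emph{level} curve, whose combinatorial meaning, modified conductances and two-sided flux-length compatibility are established in Section~\ref{se:PL level sets} (Equation~(\ref{eq:prehar}), Definition~\ref{def:modifiedsolution}, Theorem~\ref{le:length of pl}, Theorem~\ref{th:lengthsareequal}); it gets existence and uniqueness of the figure-eight from the index formula (Theorem~\ref{Th:index}) applied to the double, not from a sublevel-set/perturbation argument (perturbing the conductance changes $g$ and is not available for the given data); it applies Theorem~\ref{Th:annulus} to the two genuine annuli; and it treats the remaining piece, whose inner boundary is singular, by a limiting argument over the level curves $g(L(u))+\epsilon$, gluing the resulting cylinders consistently along the figure-eight. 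Until you either construct a discrete conjugate with the required period/no-flux properties along your cut, or replace the gradient cut by level-curve cuts as in the paper, the central step of your argument is missing.
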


Throughout this paper a Euclidean rectangle will denote the image under an isometry of a planar Euclidean rectangle. For instance, most of the image rectangles that we will construct embed in a flat Euclidean cylinder. These will further be glued in a way that will not distort the Euclidean structure (see  
\S\ref{se:Annulus} and \S\ref{se:pop} for the details).
In our setting, boundary preserving means that the rectangle associated to an edge $[u,v]$ with $u\in \partial \Omega$ has one of its edges on a corresponding boundary component of the singular surface. 

A  singular flat, genus zero compact surface with $m\geq 3$ boundary components with conical singularities, will be called a ladder of singular pairs of pants. We now state the main result of this paper:

\begin{Thm} {\rm (The general case)}
  \label{Th:ladder}
 Let  $(\Omega,\bord\Omega,{\mathcal T})$ be a bounded, $m$-connected, planar region with $E_2=E_2^1\sqcup E_2^2\ldots \sqcup E_2^{m-1}$.  Let $S_{\Omega}$ be a ladder of singular pairs of pants such that
 \begin{enumerate}
 \item ${\mbox{\rm Length}_{\rm Euclidean}}(S_{\Omega})_{E_1}= \ \ \ \sum_{x\in E_1}\frac{\bord g}{\bord n}(x)$, and
  \item ${\mbox{\rm Length}_{\rm Euclidean}}(S_{\Omega})_{E_2^i}=- \sum_{x\in E_2^i}\frac{\bord g}{\bord n}(x)$, for $i=1,\cdots, m-1$,
 \end{enumerate}
 where $(S_{\Omega})_{E_1}$ and $(S_{\Omega})_{E_2^i}$, for $i=1,\cdots, m-1$, are the boundary components of $S_{\Omega}$.
   Then, there exists
   a mapping $f$ which associates to each edge in  ${\mathcal  T}^{(1)}$ a unique  Euclidean rectangle in $S_{\Omega}$ in such a way that the collection of these rectangles forms a tiling of $S_{\Omega}$. 
   Furthermore, $f$ is  boundary preserving, and $f$ is energy preserving in the sense that  $E(g)= {\rm Area}(S_{\Omega})$. 

 \end{Thm}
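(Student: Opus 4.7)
The plan is to proceed by induction on the connectivity number $m$, with the base case $m=3$ provided by Theorem~\ref{Th:pair}. For the inductive step, I would use the combinatorially harmonic function $g$ to locate a saddle vertex $v_0$ and cut $\Omega$ along a discrete critical level curve through $v_0$. This cut splits $\Omega$ into a triply connected sub-region $\mathcal{P}$ and an $(m-1)$-connected sub-region $\Omega'$ sharing a new common boundary $\gamma$. Theorem~\ref{Th:pair} applied to $\mathcal{P}$ and the inductive hypothesis applied to $\Omega'$ then yield two singular flat surfaces, which one glues along their $\gamma$-boundaries to produce $S_{\Omega}$ together with a rectangular tiling $f$.

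The first step is to introduce the discrete conjugate function $g^{*}$ of $g$. Because $\Omega$ is $m$-connected, $g^{*}$ is multi-valued with a well-defined period around each inner boundary component $E_2^i$, and by the discrete flux formula this period equals $-\sum_{x\in E_2^i}\frac{\bord g}{\bord n}(x)$, matching the prescribed lengths in~(2). The saddle vertices of $g$ are the would-be cone points of $S_{\Omega}$; a Gauss--Bonnet count on a genus-zero surface with $m$ boundary components and all cone angles equal to $4\pi$ predicts $m-2$ saddles, consistent with peeling off one pair of pants per inductive step. I would isolate a saddle $v_0$ whose critical level curve separates an annular neighborhood of one distinguished inner boundary $E_2^{m-1}$ (together with $E_2^{m-2}$ and the saddle itself) from the rest of $\Omega$, and realize this critical level set as an embedded combinatorial cut $\gamma$ on edges of a suitable refinement of $\mathcal{T}$.

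On each sub-region the restriction of $g$ remains harmonic with the induced Dirichlet data, and because $\gamma$ arises from a level curve the normal derivatives of $g$ across $\gamma$ from the two sides are equal and opposite. Hence the total $\gamma$-boundary length of the singular flat pair of pants produced by Theorem~\ref{Th:pair} on $\mathcal{P}$ matches that of the surface produced by the inductive hypothesis on $\Omega'$, and the rectangles constructed along $\gamma$ on each side have matching widths. Gluing rectangle-by-rectangle produces $S_{\Omega}$ and a globally defined $f$. Energy preservation is additive, since $E(g)=E(g|_{\mathcal{P}})+E(g|_{\Omega'})$ and areas add under the gluing, and boundary preservation is inherited from the two building blocks.

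The hard part will be the simultaneous verification that (i) a saddle $v_0$ with the required separating property always exists, (ii) the critical level curve through $v_0$ can be realized as an embedded combinatorial path in $\mathcal{T}^{(1)}$ after at most a mild subdivision, and (iii) the rectangular tilings produced on the two sides glue compatibly along $\gamma$. Item (i) is a discrete Morse-theoretic statement about harmonic functions taking only two boundary values; item (ii) requires care around edges on which $g$ is locally constant, which one can handle by generic perturbation of the conductance function or by a refinement scheme; item (iii) is automatic once $\gamma$ is fixed, because the widths on both sides are determined by the same combinatorial flux of $g$ across $\gamma$. Carrying these verifications out compatibly with the cellular structure $\mathcal{T}$ is the technical heart of the argument.
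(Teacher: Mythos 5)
Your inductive step rests on item (i), and this is not merely the ``hard part'' --- in general it is false. The topology of the level curves is dictated by the solution $g$ of the one fixed D-BVP (all inner components at $0$, the outer at $k$), and you do not get to choose which inner boundary components a critical curve groups together. By Theorem~\ref{th:notsimple} a singular level curve is a generalized bouquet of circles; a singular vertex may have index $-n$ with $n>1$, several singular vertices may lie on the same level, and a single singular curve may enclose any number of the components $E_2^i$ at once. For instance $g$ may have exactly one singular vertex, of index $-(m-2)$, whose level curve is a bouquet of $m-1$ circles each enclosing one inner component; then there is no saddle whose critical curve separates precisely $E_2^{m-2}\sqcup E_2^{m-1}$ from the rest, so no triply connected piece can be peeled off, and perturbing the conductance function is not available since the theorem is asserted for the given data. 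Your Gauss--Bonnet count assuming all cone angles equal $4\pi$ (hence exactly $m-2$ simple saddles) contradicts Equation~(\ref{eq:cone}), which gives cone angle $2(n+1)\pi$ at a vertex of index $-n$; only the sum of the indices is controlled. The paper's proof is organized precisely to be immune to this: it cuts along the unique singular level curve enclosing \emph{all} inner boundary components (Proposition~\ref{pr:onethatenclose}) and then recurses inside each simple cycle of the bouquet, so no assumption on how the saddles are distributed is needed.

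There is a second gap even when a figure-eight cut of the kind you want happens to exist. The piece $\mathcal{P}$ then has its outer boundary pinched at $v_0$ (and $\Omega'$ has a singular boundary component), so Theorem~\ref{Th:pair}, stated for a planar region bounded by embedded circles, cannot be applied to it directly; the paper handles the analogous difficulty by applying Theorem~\ref{Th:annulus} to the annuli bounded by $E_1$ and the simple level curves at value $g(L_1)+\epsilon$ and letting $\epsilon\to 0$, realizing the singular boundary as a quotient of the bottom of a limiting cylinder. Also, the gluing along $\gamma$ is not ``automatic'': the cut creates new vertices where $\gamma$ crosses edges of ${\mathcal T}^{(1)}$, one must modify the conductances there as in Equation~(\ref{eq:prehar}), prove that the two restricted harmonic functions assign $\gamma$ the same flux-gradient length from either side (Theorem~\ref{le:length of pl}, together with part $(5)$ of Theorem~\ref{th:lengthsareequal}), and check that the rectangle placements match in the consistent sense of Equation~(\ref{eq:consistent}); the paper does not use a conjugate function $g^{*}$ but obtains these period identities from the first Green identity. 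Note finally Remark~\ref{re:inductionwilldo}: an induction on the number of boundary components is indeed possible, but its cutting curve must be the one furnished by Proposition~\ref{pr:onethatenclose}, not a pair-of-pants peeling.
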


The following Corollary is straightforward, thus establishing the statement in the abstract of this paper 
 \begin{Cor} 
 \label{Cor:surface}
 Under the assumptions of Theorem~\ref{Th:ladder}, there exists a canonical pair $(S,f)$, where $S$ is a flat surface with conical singularities of genus $(m-1)$ tiled by Euclidean rectangles and $f$ is an energy preserving mapping from
 ${\mathcal T}^{(1)}$ into $S$, in the sense that $2E(g)={\rm Area}(S)$.   Moreover, $S$ admits a pair of pants decomposition whose dividing curves have Euclidean lengths given by ${\rm (1)-(3)}$ of Theorem~\ref{Th:ladder}.
 
 \end{Cor}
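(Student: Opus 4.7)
The proof is by doubling. The plan is to produce the closed surface $S$ by gluing two isometric copies of the ladder of pants $S_\Omega$ furnished by Theorem~\ref{Th:ladder} along their common boundary.

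First I would apply Theorem~\ref{Th:ladder} to obtain the singular flat, genus zero, $m$-holed surface $S_\Omega$, together with the boundary-preserving, rectangle-tiling, energy-preserving map $f\colon{\mathcal T}^{(1)}\to S_\Omega$ satisfying $E(g)={\rm Area}(S_\Omega)$. Each boundary component of $S_\Omega$ is, by the construction in the theorem, a closed Euclidean geodesic of length prescribed by (1)--(2). Let $\overline{S}_\Omega$ denote a second, orientation-reversed isometric copy, and set
\[ S \;:=\; S_\Omega \,\cup_{\varphi}\, \overline{S}_\Omega, \]
where $\varphi$ is the tautological isometric identification of the corresponding boundary circles. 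Since a neighborhood of any smooth boundary point of $S_\Omega$ is isometric to a Euclidean half-disk, the doubled metric extends flatly across every seam and $S$ inherits a singular flat structure whose cone points are exactly the two copies of the cone points of $S_\Omega$.

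I would then compute the topology: $S_\Omega$ has $\chi(S_\Omega)=2-m$, so $\chi(S)=2(2-m)$, giving genus $m-1$. Cutting $S$ along the $m$ seams recovers $S_\Omega$ and its mirror, so the seams are disjoint, non-homotopic, simple closed geodesics realizing a pair-of-pants decomposition of $S$, with Euclidean lengths exactly the quantities appearing in (1)--(2) of Theorem~\ref{Th:ladder}. The rectangle tiling extends by taking the union of the tilings of $S_\Omega$ and of $\overline{S}_\Omega$; the boundary-preserving property ensures that each rectangle abutting a seam meets its mirror image along a full edge, so the combined collection still tiles $S$ by Euclidean rectangles. Taking $f$ to be the composition of the map from Theorem~\ref{Th:ladder} with the inclusion $S_\Omega\hookrightarrow S$ gives the desired map, and
\[ {\rm Area}(S) \;=\; {\rm Area}(S_\Omega) + {\rm Area}(\overline{S}_\Omega) \;=\; 2\,E(g). \]
Canonicity is inherited from Theorem~\ref{Th:ladder} together with the intrinsic nature of the doubling construction.

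The one point requiring care is that no new cone singularities appear along the seams; this reduces to the fact, built into the explicit construction of $S_\Omega$ in the proof of Theorem~\ref{Th:ladder}, that each boundary component of $S_\Omega$ is a flat geodesic circle in the ambient Euclidean structure, so that the standard doubling of a Euclidean half-disk produces a Euclidean disk.
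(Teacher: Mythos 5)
Your proposal is correct and follows essentially the same route as the paper: the paper's own proof is precisely the doubling construction, gluing two copies of $S_{\Omega}$ (furnished by Theorem~\ref{Th:ladder}) along corresponding boundary components to obtain the genus $(m-1)$ surface $S$ and the map restricting to $f$ on each copy. You merely spell out details the paper leaves implicit (the Euler characteristic count, the flat extension across the geodesic seams, and the area identity $2E(g)={\rm Area}(S)$), all of which are sound.
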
 
 \begin{proof}
 
 Given $(\Omega,\bord\Omega,{\mathcal T})$, glue together two copies of $S_{\Omega}$ (their existence is guaranteed by Theorem~\ref{Th:ladder}) along corresponding boundary components.  This results in a flat surface $S=S_{\Omega}\bigcup\limits_{ \bord\Omega }S_{\Omega}$ of genus $(m-1)$ and a mapping ${\bar f}$ which restricts to $f$ on each copy.
 
\end{proof}
 
In the course of the proofs of Theorem~\ref{Th:pair} and Theorem~\ref{Th:ladder}, it will become apparent that the number of singular points and their cone angles, as well as the lengths of shortest geodesics between boundary curves in the ladder, may be explicitly determined. In particular, the cone angles obtained by our construction are always even integer multiples of $\pi$ (see Equation~(\ref{eq:cone}) for the actual computation). Some of these surfaces, those with trivial monodromy, are called {\it translation surfaces} and for excellent accounts see for instance  \cite{HuMaScZo},  \cite{Ma}  and \cite{Zo}.  Also, the dimensions of each rectangle are determined by the given D-BVP problem on 
${\mathcal  T}^{(0)}$.  Concretely, for $[u,v]\in {\mathcal  T}^{(1)}$, the associated rectangle will have its height equal to $(g(u)-g(v))$ and its width equal to $c(u,v)(g(u)-g(v))$, when $g(u)>g(v)$. Some of the rectangles constructed are not embedded.  We will comment on this point (which is also transparent in the proofs) in Remark~(\ref{re:embedd}). In a snapshot, some of the rectangles which arise from intersection of edges with singular  level curves are not  embedded.


\medskip
The following theorem is foundational and serves as a building block in the proofs of all of the above theorems. We prove:
\begin{Thm} {\rm (Discrete uniformization of an annulus \cite{BSST})} 
\label{Th:annulus} 
Let  ${\mathcal A}$ be an annulus and let $(\Omega,\bord\Omega,{\mathcal T})= ({\mathcal A},\bord{\mathcal A}=E_1\sqcup E_2,{\mathcal T})$.  Let $S_{\mathcal A}$ be a straight Euclidean cylinder with height $H=k$  and 
circumference $$C=\sum_{x\in E_1}\frac{\bord g}{\bord n}(x).$$
Then 
there exists a mapping $f$ which associates to each edge in  ${\mathcal T}^{(1)}$ a unique embedded Euclidean rectangle in $S_{\mathcal A}$ in such a way that the collection of these rectangles forms a tiling of  $S_{\mathcal A}$. Furthermore, $f$ is boundary preserving, and $f$ is energy preserving in the sense that $E(g)= {\rm Area}(S_{\mathcal A})$.
\end{Thm}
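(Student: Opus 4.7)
The plan is to carry out the classical Brooks--Smith--Stone--Tutte construction, adapted to the annulus. Interpreting the finite network as an electrical circuit, define the current through the oriented edge $[u,v]$ by $I(u,v) := c(u,v)(g(u)-g(v))$. Combinatorial harmonicity of $g$ is precisely Kirchhoff's current law at every interior vertex of $\mathcal T^{(0)}$, and a telescoping argument using the definition of $\frac{\bord g}{\bord n}$ shows that the net current flowing from $E_1$ down to $E_2$ equals $C$.

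Parameterize the target cylinder $S_{\mathcal A}$ by coordinates $(x,y)$ with $y \in [0,k]$ (placing $E_2$ at $y=0$ and $E_1$ at $y=k$) and $x \in \RR/C\ZZ$. For each edge $e=[u,v]\in\mathcal T^{(1)}$ with $g(u)>g(v)$, I would set $f(e)$ to be the Euclidean rectangle of height $g(u)-g(v)$ and width $I(u,v)$, placed vertically with its bottom side on $y=g(v)$ and its top side on $y=g(u)$. The horizontal placement is obtained by a sweep: at any generic level $t\in(0,k)$, the edges that $g$-cross level $t$ inherit a cyclic ordering $e_1,\ldots,e_{n_t}$ from the planar embedding of $\mathcal T$ in $\mathcal A$, and I assign to $e_i$ the circular interval of width $I(e_i)$ whose left endpoint is $\sum_{j<i} I(e_j)$ reduced modulo $C$, measured from a fixed basepoint on $E_1$.

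The central verification is that this horizontal assignment is independent of the level $t$ chosen. At each interior vertex $v$, as $t$ passes through $g(v)$, the up-edges at $v$ drop out of the level set and are replaced by the down-edges; Kirchhoff's law at $v$ says that the total current of the up-edges equals that of the down-edges, so the total width is preserved, while planarity of $\mathcal T$ in $\mathcal A$ forces the two induced cyclic orderings to be compatible. Together these two facts imply that the horizontal interval attached to each $e$ is well defined independently of $t$, that the $f(e)$ are embedded in $S_{\mathcal A}$, and that the family $\{f(e)\}$ tiles the cylinder. Boundary preservation is automatic, and the area identity
\[
{\rm Area}(S_{\mathcal A}) = \sum_{e\in\mathcal T^{(1)}} c(u,v)\bigl(g(u)-g(v)\bigr)^2 = E(g)
\]
gives the energy preservation.

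The main obstacle is the global horizontal consistency: the sweep must produce a tiling of the full cylinder and not a multiple cover of it. This reduces to showing that the total circumferential width used at any generic level equals exactly $C$ (via the net-current calculation above), combined with an induction over interior vertices as $t$ decreases from $k$ to $0$ to verify that no two rectangles from distant parts of $\mathcal T^{(1)}$ overlap. Extracting from the planar embedding of $\mathcal T$ the precise cyclic-ordering compatibility at each vertex is the technical heart of the argument, and it is this planarity which prevents any global monodromy obstruction from arising.
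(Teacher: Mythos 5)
Your plan is, in outline, the same as the paper's: a rectangle per edge of height $g(u)-g(v)$ and width $c(u,v)(g(u)-g(v))$, placed vertically at the $g$-levels and horizontally by a cyclic sweep along level sets, with Kirchhoff/harmonicity giving local width conservation, constant flux $C$ across levels giving the circumference, and the first Green identity giving $E(g)=\mathrm{Area}(S_{\mathcal A})$. So this is not a different route; the issue is that the step you defer is exactly where the work lies, and as stated it has a genuine gap. You claim that "planarity of $\mathcal T$ in $\mathcal A$ forces the two induced cyclic orderings to be compatible" as $t$ crosses a vertex value. Planarity alone does not give this: the up-edges and down-edges at a vertex $v$ form contiguous blocks in the cyclic order around $v$ only when $v$ is a regular vertex (index zero). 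At a saddle vertex the level set through $v$ is not a circle but a figure eight, generic nearby level sets can be disconnected, and your sweep's single circular interval assignment of total width $C$ per level breaks down. The paper excludes this by the discrete index formula (Theorem~\ref{Th:index}): since $\chi({\mathcal A})=0$ and every singular vertex has strictly negative index, there are no singular vertices, so every level curve is a single simple essential closed curve; moreover it proves (Theorem~\ref{th:lengthsareequal}, Remark~\ref{re:lengthof}, using the Green identity on sub-annuli with conductances modified at the new edge/level-curve intersection points as in Equation~(\ref{eq:prehar})) that every level curve has flux-gradient length exactly $C$. Your "telescoping'' remark gestures at the latter, but the former — no saddles, hence contiguity of up/down blocks and connectedness of level sets — is never argued, and it is the fact that makes the vertex-crossing compatibility true (the paper's consistency identity, Equation~(\ref{eq:consistent}), is a flux partial-sum computation using harmonicity at the level-curve vertices, not a purely combinatorial consequence of the embedding).

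A second, smaller divergence: your proposed "induction over interior vertices'' to rule out overlaps is not specified and is not how the paper closes the argument. The paper first shows the rectangles leave no gaps (every level, including values attained in interiors of edges, is covered by tops of rectangles, again using that each level curve has length $C$ and adding vertices of type I/II with modified conductances), and then concludes no overlaps purely from the area identity $E(g)=\mathrm{Area}(\mathcal R)=\mathrm{Area}(S_{\mathcal A})$: a covering of the cylinder by rectangles of total area equal to the cylinder's area cannot overlap. If you adopt that order of quantifiers — containment by the maximum principle, no gaps from the constant flux-length of level curves, then area forces disjointness — you can dispense with the unspecified induction; but you cannot dispense with the index-formula/level-curve analysis that underwrites the sweep itself.
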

 
Given $(\Omega,\bord\Omega,{\mathcal T})$, we will work with the natural affine structure induced by the cellular decomposition. Let us denote this cell complex endowed with this affine structure by ${\mathcal C}(\Omega,\partial \Omega,{\mathcal T})$. 
 There is a common thread in our proofs of the theorems above. Extend $g$ to an affine map ${\bar g}$ defined on ${\mathcal T}$. This results in a piecewise linear structure on ${\mathcal T}$. Next, study the level curves of ${\bar g}$ on a $2$-dimensional complex which is homotopically equivalent  to ${\mathcal C}(\Omega,\partial \Omega,{\mathcal T})$, embedded in $\RR^{3}$, obtained by using ${\bar g}$ as a height function on ${\mathcal C}(\Omega,\bord\Omega,{\mathcal T})$. We will work with the level curves of  ${\bar g}$ or equivalently, with their projection on ${\mathcal C}(\Omega,\partial \Omega,{\mathcal T})$.
   The topological structure of the level curves associated with the solutions will be carefully studied.  For example, the level curves in the case of an annulus 
(Theorem~\ref{Th:annulus}) are simple, piecewise linear closed curves, all of which are  in the (free) homotopy  class determined by $E_1$. 
 One nice consequence of this is that  all the rectangles constructed in the proof of Theorem~\ref{Th:annulus} are embedded.
In the proof of Theorem~\ref{Th:pair}, we will show that all the level curves associated to values in $[0, k_1]$, for some constant $k_{1}<k$,  are simple, piecewise linear closed  curves in either the (free) homotopy class of $E_2^1$ or the class determined by $E_2^2$.
However, for the value $k_1$, it will be proved that the (unique) associated level curve is a figure eight. Furthermore, any level curve of ${\bar g}$ for a value which is larger than $k_1$ and smaller than or equal to $k$, is a simple closed curve in the (free) homotopy class determined $E_1$.
The basic idea of Theorem~\ref{Th:pair} is to cut 
$(\Omega,\bord\Omega,{\mathcal T})$ along the (projection of) a figure eight curve, tile each cylinder according to
Theorem~\ref{Th:annulus}, and glue back. 

We will often work with a series of modified boundary value problems. Each is a slight modification of the initial problem. This important feature of the proofs is due to the fact that the level sets of the original boundary value problem (defined on ${\mathcal T}^{(0)}$) intersect ${\mathcal T}^{(1)}$ in a set which is much larger than ${\mathcal T}^{(0)}$ (see \S2 for details).

In fact, once Theorem~\ref{Th:annulus} is proved, we proceed to prove Theorem~\ref{Th:ladder} by an inductive process. Unlike common proofs in the theory of surfaces, in which a surface is cut along closed geodesics yielding a  pair of pants decomposition, we keep cutting our region along particular (projection of) singular level curves until we encounter a planar pair of pants or an annulus. This is a subtle point, arising from the fact that our gluing needs to preserve lengths of curves determined by two boundary value problems (see Definition~\ref{eq:gradient metric}).
 A technical point (which will be addressed in \S 4) is that one boundary component of one of the pieces (or more) in the decomposition will be singular, hence Theorem~\ref{Th:annulus} cannot be applied directly.
 
The paper is organized as follows. In \S1 we introduce notation, recall a few useful facts concerning boundary value problems on graphs, and define a new natural metric induced by a solution of a boundary value problem (see Definition~\ref{de:gradientmetricl1}). In \S 2  we carry out analysis of level curves of the D-BVP solution; our study brings analysis and topology together in order to provide a good notion of length for level curves as well as a topological description of them.  In \S 3,  we prove Theorem~\ref{Th:annulus}, and  \S 4 is devoted to the proofs of Theorem~\ref{Th:pair} and Theorem~\ref{Th:ladder}. Figures~\ref{fig-4trian}--\ref{fig-4surface},
Figure~\ref{fig-annulus}, and Figure~\ref{fig-5levels}  were generated by two lengthy programs written in Mathematica (version 7.0) by the author and will be available upon request. 
 
\begin{Rem}
The assertions of Theorem~\ref{Th:annulus} may (in principle) be obtained by employing 
 techniques introduced in the famous  paper by Brooks, Smith, Stone and Tutte (\cite{BSST}), in which they study square tilings of rectangles. They define a correspondence between square tilings of rectangles and planar multigraphs endowed with two poles, a source, and a sink. They view the multigraph as a network of resistors in which current is flowing. In their correspondence, a vertex corresponds to a connected component of the union of the horizontal edges of the squares in the tiling; one edge appears between two such vertices for each square whose horizontal edges lie in the corresponding connected components. Their approach is based on {\it Kirckhhoff's circuit laws} that are widely used in the field of electrical engineering. We found the sketch of the proof of Theorem~\ref{Th:annulus} given in \cite{BSST} hard to follow. In fact, another proof of a slight generalization of this theorem was given by Benjamini and Schramm (\cite{BeSch1}, see also \cite{BeSch2} for a related study) using techniques from probability and the dual graph of ${\mathcal T}$.  It is interesting to recall that it was Dehn \cite[1903]{D} who was the first to show a relation between square tiling and electrical networks. 
In an elegant paper, combining a mixture of geometry and probability, Kenyon (\cite{Ke}) used the fact that a  resistor network corresponds to a reversible Markov chain. He showed a correspondence between planar non-reversible Markov chains and trapezoid tilings. A completely different method, for the case of tiling a rectangle by squares, was given using {\it extremal length} arguments in \cite{Sch} by Schramm. One should also mention that Cannon, Floyd and Parry (see \cite{CaFlPa}), using extremal length arguments (similar to these in \cite{Sch}), provide another proof for the existence of tiling by squares. Both \cite{Sch} and \cite{CaFlPa} are  widely known as ``a finite Riemann mapping theorem" and serve as the first step in Cannon's combinatorial Riemann mapping theorem (\cite{Ca}).  We include our proof of Theorem~\ref{Th:annulus}, which is guided by similar principles to some of the ones mentioned above, yet significantly different in a few  points,  in order to make this paper self-contained. In addition, the important work of Bendito, Carmona and Encinas (see for example \cite{BeCaEn1},\cite{BeCaEn2},\cite{BeCaEn3}) on boundary value problems on graphs
 allows us to use a unified framework to even more general problems. Their work is essential to our applications and we will use parts of it quite frequently in this paper as well as in its sequels (\cite{Her2},\cite{Her3}). 
\end{Rem}   

\bigskip

\noindent{\small \ack  {\small \ 
Part of this research was conducted
while the author visited the Department of Mathematics at Princeton University 
during the Spring of 2005. We express our deepest
gratitude for their generous hospitality and inspiration. We
are grateful to G\'{e}rard Besson, Francis Bonahon, Gilles Courtois, Dave Gabai, Steven Kerckhoff and Ted Shifrin
  for enjoyable and helpful discussions on the subject of this paper.} The results of this paper and 
  its sequel (\cite{Her2}) were presented at the $G^3$ (New Orleans, January 2007) and at the  Workshop on Ergodic Theory and Geometry (Manchester Institute for Mathematical Sciences, April 2008). We deeply thank the organizers for the invitations and well-organized conferences.}

\newpage
\section{Preliminaries - boundary value problems on graphs} \label{se:Pre} 
\label{se:BVP} 
We recall some known facts regarding harmonic functions
 and boundary value problems on networks.
We use the notation of Section 2 in \cite{BeCaEn}. Let
$\Gamma=(V,E,c)$ be a {\it finite network}, that is a simple and
finite connected graph with vertex set $V$ and edge set $E$.
Since in this paper $\Gamma=(V,E,c)$ is induced by $(\Omega,\bord\Omega,{\mathcal T})$, we shall further assume that the graph is planar.
 Each edge $(x,y)\in E$ is
assigned a {\it conductance} $c(x,y)=c(y,x)>0$.
 Let ${\mathcal P}({ V})$
denote the set of non-negative functions on $V$. If $u\in {\mathcal
P}( V)$, its support is given by $S(u)=\{ x \in V: u(x)\neq 0 \}$.
Given $F\subset V$ we denote by $F^{c}$ its complement in
$V$.  Set
${\mathcal P}(F)=\{u\in {\mathcal P}(V):S(u)\subset F\}$.  The set   $\bord F=\{ (x,y)\in E: x\in F,  y\in F^{c} \}$  is called 
the {\it
edge boundary} of $F$ and the set  $\delta F=\{x\in F^{c}: (x,y)\in E\ {\mbox
{\rm for some}}\ y\in F \}$ is called the {\it vertex boundary} of
$F$. Let ${\bar F}=F\bigcup \delta F$ and let $\bar E=\{(x,y)\in
E :x\in F\}$.
Given $F \subset V$, let
${\bar \Gamma}(F)=({\bar F},{\bar E},{\bar c})$ be the network
such that ${\bar c}$ is the restriction of $c$ to ${\bar E}$. 
We say that $x\sim y$ if $(x,y)\in \bar E$. For $x\in \bar F$ let $k(x)$ denote the degree of $x$ (if $x\in\delta(F)$ the neighbors of $x$ are taken
  only  from $F$). 
  
 

The following are discrete analogues of
classical notions in continuous potential theory \cite{Fu}.

\begin{Def}\mbox{ \rm (\bf\cite[Section 3]{BeCaEn1})}
 \label{def:energy}  
 Let $u\in {\mathcal P}({\bar  F})$. 
 Then for $x\in \bar F$, the function $\Delta u(x)=\sum_{y\sim x}c(x,y)\left( u(x)-u(y) \right )$ is called
  the Laplacian of $u$ at $x$, \mbox{\rm(}if $x\in\delta(F)$ the neighbors of $x$ are taken
  only  from $F$\mbox{\rm)} and
 the number
 \begin{equation}
 \label{eq:energy}
E(u)= \sum_{x\in\bar F}\Delta u(x)u(x)=\sum_{(x,y)\in \bar E} c(x,y)( u(x)-u(y)
)^2,
\end{equation}
 is called the {\it Dirichlet energy} of $u$.
A function $u\in {\mathcal P}({\bar F})$ is called harmonic in $F\subset V$ if
$\Delta u(x)=0,$ for all $x\in F$.
\end{Def}

For example when $c(x,y)\equiv 1$, 
 $u$ is harmonic at a vertex $x$ if and only if the
value of $u$ at $x$ is the arithmetic average of the value of $u$
on the neighbors of $x$. 
A fundamental property which we will often use  in the {\it maximum property}, asserting that if $u$ is harmonic on $V'\subset V$, where $V$ is a connected subset of vertices having a connected interior, then $u$ attains its maximum and minimum on the boundary of $V'$ (see \cite[Theorem I.35]{So}).


For $x\in \delta(F)$, let $\{y_1,y_2,\ldots,y_m\}\in F$ be its neighbors enumerated clockwise.
 The {\it normal derivative} (see \cite{ChGrYa}) of $u$ at a point
$x\in \delta F$ with respect to a set $F$ is 
\begin{equation}\frac{\bord u}{\bord n}(F)(x)= \sum_{y\sim x,\
y\in F}c(x,y)  (u(x)-u(y)).
\end{equation} 
\smallskip 

The following
 proposition establishes a discrete version of the first classical {\it Green
identity}. It plays a crucial role in the proof the main theorem in  \cite{Her} and is essential in this paper.

\begin{Thm} \mbox{\rm \bf(\cite[Prop. 3.1]{BeCaEn}) (The first Green
identity)}
\label{pr:Green id} Let $F \subset V$ and $u,v\in {\mathcal P}({\bar
F})$. Then we have that
\begin{equation}
\label{eq:Green}
 \sum_{(x,y)\in {\bar
E}}c(x,y)(u(x)-u(y))(v(x)-v(y))=\sum_{x\in F}\Delta
u(x)v(x)+\sum_{x\in\delta(F)}\frac{\bord u}{\bord n}(F)(x)v(x).
\end{equation}
\end{Thm}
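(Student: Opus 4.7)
The plan is to prove the identity by a direct expansion of the bilinear form on the left-hand side and then regrouping the sum vertex-by-vertex, finally splitting the resulting expression according to whether a vertex lies in the interior set $F$ or the vertex boundary $\delta F$. This is the discrete analogue of the classical integration-by-parts manipulation $\int \nabla u\cdot\nabla v = -\int (\Delta u)\, v + \int_{\partial} (\partial u/\partial n)\, v$, and the combinatorial proof is essentially a Fubini/rearrangement argument on the underlying graph.

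First I would rewrite the sum over edges as half of a sum over ordered pairs, exploiting the symmetry of both $c(x,y)$ and of the product $(u(x)-u(y))(v(x)-v(y))$ under swapping $x$ and $y$. Expanding
\[
(u(x)-u(y))(v(x)-v(y)) = (u(x)-u(y))v(x) - (u(x)-u(y))v(y),
\]
and combining the two ordered contributions of each unordered edge, one sees that the LHS of \eqref{eq:Green} equals
\[
S := \sum_{x\in \bar F} v(x) \sum_{\substack{y\sim x \\ (x,y)\in \bar E}} c(x,y)\bigl(u(x)-u(y)\bigr).
\]
Here I am using the fact (from the definition of $\bar E$) that every edge contributing to the LHS has at least one endpoint in $F$, and that both endpoints therefore lie in $\bar F = F \cup \delta F$, so no edge is missed when we iterate over $x \in \bar F$.

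Next I would split $S$ according to whether the outer vertex $x$ lies in $F$ or in $\delta F$. For $x \in F$, every neighbor $y$ of $x$ gives an edge in $\bar E$ (since $x \in F$), so the inner sum is exactly $\Delta u(x)$ by Definition~\ref{def:energy}. For $x \in \delta F$, the edges in $\bar E$ incident to $x$ are precisely those $(x,y)$ with $y \in F$ (an edge joining two vertices of $\delta F$ is not in $\bar E$, in accordance with the convention that neighbors of $x \in \delta F$ are taken only from $F$), so the inner sum is exactly $\frac{\bord u}{\bord n}(F)(x)$ by definition of the normal derivative. Putting these two pieces together gives the desired identity.

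There is no real obstacle here; the only delicate point is bookkeeping, namely making sure that the symmetrization step correctly accounts for each unordered edge exactly once, and that the restriction ``$(x,y) \in \bar E$'' in the inner sum for $x \in \delta F$ is correctly interpreted as ``$y \in F$.'' Once these conventions are tracked, the identity reduces to a one-line regrouping of a finite double sum.
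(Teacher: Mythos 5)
Your argument is correct and complete: the edge-by-edge splitting $c(x,y)(u(x)-u(y))(v(x)-v(y))=c(x,y)(u(x)-u(y))v(x)+c(y,x)(u(y)-u(x))v(y)$, regrouping by vertex over $\bar F$, and then reading off $\Delta u(x)$ for $x\in F$ and $\frac{\bord u}{\bord n}(F)(x)$ for $x\in\delta F$ (where the incident edges of $\bar E$ force $y\in F$) is exactly the standard summation-by-parts proof. The paper itself gives no proof, importing the identity from \cite[Prop.~3.1]{BeCaEn}, and your self-contained argument is essentially the one found there, with the bookkeeping conventions on $\bar E$ and $\bar F$ handled correctly.
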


\begin{Rem}
 In \cite{BeCaEn}, a second Green identity is obtained. In this paper we will
   use only the one above. In \cite{BeCaEn3} (see in particular Section 2 and Section 3), a systematic study of 
discrete calculus on $n$-dimensional (uniform) grids of Euclidean $n$-space is provided. Their definition of a tangent space may be adopted to our setting and does not require the notion of directed edges. 
\end{Rem}

Let  $\T$  denote a fixed cellular decomposition of $\Omega \cup \partial\Omega$, and
 let  $F\subset {\mathcal T}^{(0)}$ be given.
  Let  $\{c(x,y)\}_{(x,y)\in \bar{E}}$ be a fixed conductance function, and let $\bar{\Gamma}(F)$ be the associated network.
We are interested in functions that solve a boundary value problem (D-BVP) on $\bar{\Gamma}( F)$. The following definition is based on \cite[Section 3]{BeCaEn} and \cite[Section 4]{BeCaEn2}.
\begin{Def}
\label{de:boundary function}
 Let $k>0$ be a constant.
 A solution of a Dirichlet boundary value problem defined on $\bar{\Gamma}(\Omega)$
 is a function $f \in {\mathcal P}({\Omega})$ such that
 $f$ is harmonic in $F$,
  $f|_{E_{1}}=k$ and 
  $f|_{E_{2}}=0$, for some positive constant $k$.
  \end{Def}
  
\begin{Rem}
The uniqueness and existence of a Dirichlet boundary value
solution is guaranteed by the foundational work in \cite[Section 3]{BeCaEn} and  \cite[Section 4]{BeCaEn2}. In fact, their work provides a detailed framework for a broader class of boundary value problems on finite networks.
\end{Rem}

A {\it metric}  on a finite network is a function $\rho : V\ra [0,\infty)$. In particular, the length of a path is given by integrating $\rho$ along the path (see \cite{Ca} and \cite{Du} for a different definition). 
When $\rho\equiv 1$,  the familiar distance function on $V\times V$ is obtained by setting 
$\mbox{\rm dist}( A, B)= (\sum_{x\in \alpha} 1) -1=  k$, where $\alpha=(x,x_1,\ldots x_k)$ is a path with the smallest possible number 
of vertices among all the paths connecting a vertex in $ A$ and a vertex in $B$. 
 We now define a ``metric" which will be used throughout this paper. 
\begin{Def}
\label{de:gradientmetricl1}
Let $F\subset V$ and let  $f\in {\mathcal P}({\bar F})$. The {\it flux-gradient metric} is defined by
\begin{equation}
\label{eq:gradient metric}
 \rho(x)=     \frac{\bord f}{\bord n}(F)(x), \ \ \mbox{\rm if}\  x\in
    \delta(F).
    \end{equation}

This allows us to  define a notion of length to any subset of the vertex boundary of $F$ by declaring: 
\begin{equation}
\label{eq:length gradient metric}
\mbox{\rm Length}(\delta F)=\big |\sum_{x \in \delta F} \frac{\bord f}{\bord n}(F)(x)\big|.
\end{equation}
\end{Def}


In the applications of this paper, we will use the second part of the definition in order to define length of connected components of level curves of the D-BVP solution. 
In \cite[Definition 3.3]{Her}, we defined a similar metric ($l_2$-gradient metric) proving several length-energy inequalities.

\newpage
\section{topology and geometry of piecewise linear level curve}
\label{se:PL level sets} 

Let ${\mathcal G}$ be a polyhedral surface and consider a function $f: {\mathcal G}^{(0)}\rightarrow \RR\cup\{0\}$ such that two adjacent vertices are given different values. 
Let $v\in {\mathcal G}^{(0)}$, and let $w_1,w_2,\ldots,w_k$ be its $k$ neighbors enumerated counterclockwise. Following \cite[Section 3]{LaVe}, consider the number of sign changes in the sequence $\{ f(w_1)-f(v),f(w_2)-f(v),\ldots,f(w_k)-f(v),f(w_1)-f(v)\}$, which we will denote by  $\mbox{\rm Sgc}_{f}(v)$. The index of $v$ is defined as 
\begin{equation}
\label{eq:index}
\mbox{\rm Ind}_{f}(v)= 1- \frac{\mbox{\rm Sgc}_{f}(v)}{2}.
\end{equation}

\begin{Def}
A vertex whose index is different from zero will be called  singular; otherwise the vertex is regular. A level set which contains at least one singular vertex will be called singular; otherwise the level set is regular.
\end{Def} 

A connection between the combinatorics and the topology is provided by the following theorem, which may be considered as a discrete Hopf-Poincar\'{e} Theorem. 

\begin{Thm} \mbox{\rm (\cite[Theorem 1]{Ba}, \cite[Theorem 2]{LaVe}) ({\bf An index formula)}}
\label{Th:index} We have
\begin{equation}
\label{eq:Euler}
\sum_{v\in {\mathcal G}}\mbox{\rm Ind}_{f}(v)=\chi({\mathcal G}).
\end{equation}
\end{Thm}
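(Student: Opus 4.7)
The plan is to prove the identity by a discrete Morse-theoretic sweep, in the spirit of Banchoff \cite{Ba}. Using the defining formula $\mbox{\rm Ind}_f(v)=1-\frac{1}{2}\mbox{\rm Sgc}_f(v)$, the left-hand side equals $V-\frac{1}{2}\sum_v\mbox{\rm Sgc}_f(v)$, so the theorem is equivalent to the combinatorial identity $\frac{1}{2}\sum_{v\in\mathcal{G}^{(0)}}\mbox{\rm Sgc}_f(v)=E-F$. I would obtain this by building $\mathcal{G}$ one vertex at a time and tracking the Euler characteristic.

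Order the vertices as $v_1,v_2,\ldots,v_n$ with $f(v_1)<f(v_2)<\cdots<f(v_n)$, which is compatible with the hypothesis that adjacent vertices carry distinct values (ties among non-adjacent pairs can be broken by an arbitrarily small perturbation that changes no $\mbox{\rm Sgc}_f$). Define $\mathcal{G}_i$ to be the subcomplex of $\mathcal{G}$ spanned by $\{v_1,\ldots,v_i\}$: the union of those cells whose entire vertex set lies in this prefix. Then $\mathcal{G}_0=\emptyset$, $\mathcal{G}_n=\mathcal{G}$, and a telescoping gives
\[
\chi(\mathcal{G})=\sum_{i=1}^{n}\bigl(\chi(\mathcal{G}_i)-\chi(\mathcal{G}_{i-1})\bigr).
\]

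The crux is the local identity $\chi(\mathcal{G}_i)-\chi(\mathcal{G}_{i-1})=\mbox{\rm Ind}_f(v_i)$. Fix $v=v_i$, let $a$ be the number of its already-present (``descending'') neighbors, and write $\mbox{\rm Sgc}_f(v)=2s$; the descending neighbors split into $s$ maximal arcs in the cyclic order around the link of $v$. In the triangulated case the new cells at step $i$ are $v$ itself, one edge from $v$ to each descending neighbor, and one triangle per pair of cyclically consecutive descending neighbors, yielding $a-s$ new triangles (with the natural conventions $a-s=a$ when $v$ is a local maximum and $a-s=0$ when it is a local minimum). Hence
\[
\chi(\mathcal{G}_i)-\chi(\mathcal{G}_{i-1}) \;=\; 1-a+(a-s) \;=\; 1-s \;=\; \mbox{\rm Ind}_f(v_i),
\]
and summing over $i$ closes the argument.

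The main obstacle is the handling of the non-triangular $2$-cells (the quadrilaterals permitted by the paper's setup). A quadrilateral enters the filtration only at the moment its last vertex arrives, and its attachment depends on the cyclic pattern of $f$-values along its entire boundary, not just on the descending neighbors of $v_i$; in particular, a quad may carry two local maxima along its boundary, producing an extra component in the descending link that the sign-change count over graph-neighbors of $v_i$ does not detect. A careful case-analysis of how each quadrilateral closes up---tracking the Euler characteristic of the corresponding piece of the descending star---restores the local identity and completes the proof.
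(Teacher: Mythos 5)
Your sweep argument for the triangulated closed case is correct: filtering by the spanned sublevel subcomplexes and computing the increment $1-a+(a-s)=1-s=\mbox{\rm Ind}_f(v_i)$ is exactly the standard way of counting each cell at its top vertex, which is in essence the argument of the sources the paper cites ([Ba], [LaVe]); the paper itself gives no proof of this theorem, only the citation, together with Remark~\ref{re:withboundary} asserting the extension to general cell decompositions and to surfaces with boundary. So up to and including the displayed local identity your proposal is sound.

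The gap is your final paragraph. For quadrilateral $2$-cells no case analysis can ``restore the local identity,'' because with $\mbox{\rm Sgc}_f$ computed over graph-neighbours the formula itself fails for general quad decompositions. Summing your increments over all vertices gives $\sum_v\mbox{\rm Ind}_f(v)=V-E+\sum_{F}\mu(F)$, where $\mu(F)$ is the number of vertices of the face $F$ lying above both of their neighbours along $\partial F$ (each edge is descending at exactly one endpoint, and each cyclically adjacent descending pair at a vertex is a corner of exactly one face). A triangle always has $\mu=1$, but a quadrilateral whose boundary values alternate low--high--low--high has $\mu=2$, so in general $\sum_v\mbox{\rm Ind}_f(v)=\chi(\mathcal{G})+(\mbox{number of alternating quadrilaterals})$. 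Concretely, on the cube (a quad decomposition of $S^2$, $\chi=2$) assign the bottom four vertices the values $0,10,1,11$ in cyclic order and the top four the values $20,30,21,31$ (each above the corresponding bottom vertex); adjacent vertices get distinct values, every vertex has index $0$ or $1$, and the indices sum to $4$, not $2$ --- the bottom and top faces are the two alternating quads. Hence to complete your proof you must either restrict to triangulations (which is all the cited theorems claim, and all the statement strictly needs) or add the hypothesis that no $2$-cell has cyclically alternating $f$-values on its boundary, in which case $\mu(F)=1$ for every face and your telescoping closes exactly as in the triangulated case. Note that this is precisely the point that the paper's Remark~\ref{re:withboundary} passes over when it asserts the quadrilateral case follows from topological invariance of $\chi$.
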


\begin{Rem}
\label{re:withboundary}
Due to the topological invariance of $\chi({\mathcal G})$, note that once the equation above is proved for a triangulated polyhedron, it holds (keeping the same definitions for $\mbox{\rm Sgc}_{f}(\cdot)$ and $ \mbox{\rm Ind}_{f}(\cdot)$) for any cellular decomposition of $\chi({\mathcal G})$. Also, while the theorem above is stated and proved for a closed polyhedral surface, it is easy to show that it holds in the case of a surface with boundary, where there are no singular vertices on the boundary (simply by doubling along the boundary).
\end{Rem}

Henceforth, we will keep the notation of Section~\ref{se:Intro} and Section~\ref{se:Pre}.  A key ingredient in our proofs of the theorems stated in the introduction is the ability to define a length for a level curve of $g$. 
 The main difficulty in defining such a quantity (for level curves) is the fact that these are not piecewise linear curves of the initial cellular decomposition; hence we cannot  directly define the weight $\rho$ along them (see Equation~(\ref{eq:gradient metric})). 
 
\medskip
Suppose that $L$ is a fixed, simple, closed  level curve and let ${\mathcal O}_1, {\mathcal O}_2$ be the two distinct connected components of $L$ in $\Omega$ with $L$ being the boundary of both (this follows by using the Jordan curve theorem; see for instance \cite[Theorem III.5.G]{Bi} for an interesting proof). 
We will call one of them, say ${\mathcal O}_1$, an {\it interior domain} if all the vertices which belong to it have $g$-values that are smaller than the $g$-value of $L$. The other domain will be called the {\it exterior domain}. Note that, by the maximum principle, one of ${\mathcal O}_1, {\mathcal O}_2$ must have all of its vertices with $g$-values smaller than $L$.

Let $e\in {\mathcal T}^{(1)}$ and $x=e\cap L$. For $x\not\in {\mathcal T}^{(0)}$, we  have now created two new edges $(x,v)$ and $(u,x)$. We may assume that $v\in {\mathcal O}_1$ and $u\in {\mathcal O}_2$. We now define conductance constants $\tilde c(v,x)$ and $\tilde c(x,u)$ by 

\begin{equation}
\label{eq:prehar}
\tilde c(v,x)= \frac {c(v,u) (g(v)-g(u))}{g(v)-g(x)} \ \   \mbox{\rm and}\ \  \tilde c(u,x) = \frac {c(v,u) (g(u)-g(v))}{g(u)-g(x)}.
\end{equation}

We repeat the process above for any new vertex formed by the intersection of $L$ with ${\mathcal T}^{(1)}$. By adding all the new vertices and edges, as well as the piecewise arcs of $L$ determined by the new vertices, we obtain two cellular decompositions, ${\mathcal T}_{{\mathcal O}_1}$ of  
${\mathcal O}_1$ and  ${\mathcal T}_{{\mathcal O}_2}$ of ${\mathcal O}_2$. Also, two conductance functions are now defined on the one-skeleton of these cellular decompositions by modifying the conductance function for $g$ according to Equation~(\ref{eq:prehar}) (i.e. changes are occurring only on new edges). We will denote these by ${\mathcal C}_{{\mathcal O}_1}$ and ${\mathcal C}_{{\mathcal O}_2}$ respectively (on the arcs of $L$ the conductance is identically zero).

\begin{Def}
\label{def:modifiedsolution}
For $i=1,2$, we let $g_i$ denote the solution of the D-BVP determined on the induced cellular decomposition on ${\mathcal O}_i$ defined by the following:
\begin{enumerate}
\item for any boundary component $E$ of ${\mathcal O}_i$ and for every vertex $x\in E$, $g_i(x)=g(x)$,
\item the conductance function on ${\mathcal O}_i$ is ${\mathcal C}_{{\mathcal O}_i}$, and
\item $g_i$ is harmonic in ${\mathcal O}_i$.
\end{enumerate}
\end{Def}
 
It follows from Equation~(\ref{eq:prehar}), Definition~\ref{def:energy}, and the existence and uniqueness theorems in \cite{BeCaEn} that for $i=1,2$, $g_i$ exists  and that $g_i= g|_{{\mathcal O}_i}$. We may now define a flux-gradient metric for $L$ by using 
Equations~(\ref{eq:gradient metric}) and (\ref{eq:length gradient metric}). However, unlike the situation with the boundary components of $\Omega$, we have two possible choices, i.e. computing the normal  derivative along $L$ with respect to ${\mathcal O}_1$ or with respect to ${\mathcal O}_2$. 
Since in the applications we will cut $\Omega$ along a particular $L$ and wish to glue the resulting pieces together along $L$, these lengths computed with respect to the  flux-gradient metric in each domain should be the same.  The situation when $L$ is not simple is more complicated and will be addressed after a detailed analysis of the topological structure of level curves has been carried out
(see the discussion after Remark~\ref{re:complexity}). This analysis is the main core of this section. The next lemma shows that all level curves are in fact closed.

 \begin{Lem}
 \label{le:simpleisclosed}
 A level curve for the function $g$ is piecewise linear and closed, and each simple cycle of $L$ contains at least
 one boundary component of $\partial\Omega$.
 \end{Lem}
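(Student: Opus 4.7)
My plan is to work throughout with the continuous affine extension $\bar g$ of $g$ to $\Omega\cup\partial\Omega$, so that a level curve is literally the level set $L=\bar g^{-1}(c)$. Piecewise linearity is then immediate: since adjacent vertices carry distinct values, $\bar g$ is non-constant on each $2$-cell $\sigma$, hence $L\cap\sigma$ is a (possibly empty, possibly degenerate) line segment whose endpoints lie on $\partial\sigma$. Taking the union over cells shows that $L$ is a piecewise linear subset of $\Omega\cup\partial\Omega$.

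For closedness I would verify that no point of $L$ is a free endpoint, stratifying $p\in L$ according to which cell of $\mathcal T$ contains it. Inside a $2$-cell the segment $L\cap\sigma$ terminates only on $\partial\sigma$. At a relative-interior point of an edge $e=[u,v]$, $\bar g(p)=c$ forces $g(u)$ and $g(v)$ to straddle $c$, so $L$ propagates into each $2$-cell incident to $e$; for $c\in(0,k)$ the strong maximum principle rules out $p$ lying on a boundary edge, since $\bar g\equiv k$ on $E_1$ and $\bar g\equiv 0$ on $E_2$. At a vertex $v\in L$, the arcs of $L$ emanating from $v$ are in bijection with the sign changes in the cyclic sequence $\{g(w_i)-c\}_{i=1}^{k}$ of neighbor-values; there are exactly $\mbox{\rm Sgc}_{g}(v)$ such arcs, which is always an even, positive number since sign changes around a closed cycle come in pairs. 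Hence every vertex on $L$ has even positive local valence, and no free endpoint appears.

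For the final assertion I would proceed by contradiction. Suppose $\gamma\subset L$ is a simple closed sub-cycle bounding a disk $D\subset\Omega$. The continuous function $\bar g$ on the compact set $\overline D$ attains its maximum $M$, and since $\bar g$ is affine on every cell, the maximum is realized at a vertex $v^\ast\in \overline D\cap\mathcal T^{(0)}$. If $v^\ast\in\gamma$ then $M=c$, and the symmetric argument for the minimum forces $\bar g\equiv c$ on $\overline D$; since $D$ has positive planar area it meets some $2$-cell $\sigma$ in positive area, so $\bar g\equiv c$ on $\sigma$, and then all vertices of $\sigma$ share the value $c$ --- contradicting the hypothesis that adjacent vertices have distinct values. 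If instead $v^\ast\in D^\circ$ with $M>c$, then for any $\mathcal T$-neighbor $w$ of $v^\ast$ lying outside $\overline D$ the edge $[v^\ast,w]$ must cross $\gamma$, and linear interpolation of $\bar g$ from $M$ at $v^\ast$ through the value $c$ at the crossing forces $g(w)<c<M$; combined with $g(w')\le M$ for all other neighbors, the harmonic averaging identity at $v^\ast$ yields $g(v^\ast)<M$, a contradiction. Hence every neighbor of $v^\ast$ lies in $\overline D$ and also attains the value $M$, and the same analysis applies iteratively. The set $\{v\in\mathcal T^{(0)}:g(v)=M\}$ is therefore closed under taking $\mathcal T$-neighbors, and connectedness of $\mathcal T^{(0)}$ forces it to coincide with $\mathcal T^{(0)}$, contradicting $g|_{E_2}=0$. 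Thus $D\not\subset\Omega$, and by planarity $D$ must enclose at least one component of $\partial\Omega$.

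The principal technical obstacle is the vertex analysis in the closedness step, specifically at singular vertices where several branches of $L$ meet; this is precisely where the index formalism (Equation~(\ref{eq:index}) and Theorem~\ref{Th:index}) ensures the parity needed to match arc-ends into full arcs. The remaining argument is a careful bookkeeping of the discrete maximum principle on the subnetwork of $\mathcal T$ induced inside $\overline D$, taking care that edges may cross $\gamma$ but only at $\bar g$-values equal to $c$.
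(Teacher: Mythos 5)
Your proof is correct and follows the paper's overall strategy (affine extension gives piecewise linearity, a local case analysis gives closedness, and a maximum-principle argument gives the enclosure statement), but the two steps are executed somewhat differently. For closedness the paper merely observes that a free endpoint of $L$ would have to lie in the interior of a cell, in the interior of an edge, or at a vertex, and that all cases are ``easily ruled out''; your even-valence count via $\mbox{\rm Sgc}_{g}(v)$, together with harmonicity forcing $\mbox{\rm Sgc}_{g}(v)\geq 2$ at interior vertices, makes the vertex case explicit and is exactly the detail the paper leaves implicit. For the enclosure statement the paper cuts along $L$, introduces the modified conductances of Equation~(\ref{eq:prehar}) and the auxiliary solution $g_1$ of Definition~\ref{def:modifiedsolution}, and obtains the contradiction from the maximum principle for $g_1$ on the enclosed region; you instead run the discrete maximum principle directly on $g$ at vertices inside the disk, using that any edge leaving the disk must cross $\gamma$ at the value $c$, which avoids appealing to the existence and uniqueness theory for the modified D-BVP and is in that sense more elementary and self-contained. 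The one point worth tightening is your claim that the maximum of $\bar g$ over $\overline{D}$ is attained at a vertex of ${\mathcal T}^{(0)}$: a priori it could be attained on $\gamma$ itself (your case $M=c$), and when $M>c$ one should add the one-line argument that following a direction of increase of the affine pieces cannot exit through $\gamma$, since $\gamma$ carries the smaller value $c$, and hence terminates at a vertex of value $M$ inside the disk.
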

\begin{proof}
The assertions of the lemma are certainly true for the components of $\partial \Omega$. Assume now that $L$ is not one of the these level curves and furthermore that it is not closed. Let $q$ be a boundary point of $L$. Since $g$ is extended linearly on edges and in an affine fashion on triangles and quadrilaterals, $q$ may (a priori) be an interior point of an edge of the cellular decomposition, a vertex, or in the interior of a cell. All of these cases are easily ruled out. It remains to prove that any such level curve contains at least one boundary component of $\partial \Omega$. If this is not the case, $L$ (being a finite, closed,  polygonal planar line) bounds a union of $2$-cells in $\Omega$. This is a violation of the maximum principle for the harmonic function $g_1$ constructed as in Definition~\ref{def:modifiedsolution} 
(${\mathcal O}_1$ being one connected component of the union 
of the $2$-cells  bounded by $L$). 
 \end{proof}
 
\begin{Rem}
Since $g$ is extended in an affine fashion along edges, it is clear that two disjoint level curves corresponding to the same $g$ value are at combinatorial distance which is at least one. 
\end{Rem}

The structure of simple curves in the plane can be quite complicated and in fact is not completely understood. In our applications, we need only analyze  the topological structure of closed curves arising as level curves of the affine extension (over triangles and quadrilaterals) of a harmonic function as defined above. We will henceforth work in the piecewise category.

\begin{figure}[h]
  \begin{minipage}[t]{.45\textwidth}
    \begin{center}  
      \includegraphics[width=2in]{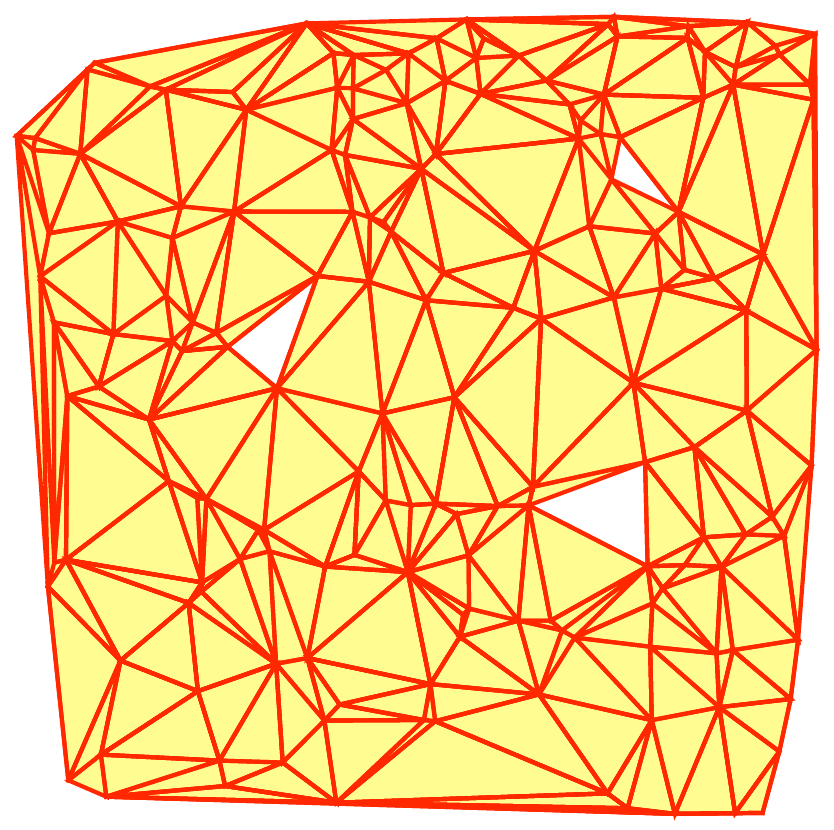}
      \caption{4-connected triangulated domain}
      \label{fig-4trian}
    \end{center}
  \end{minipage}
 \hfill 
  \begin{minipage}[t]{.45\textwidth}
   \begin{center}  
      \includegraphics[width=2in]{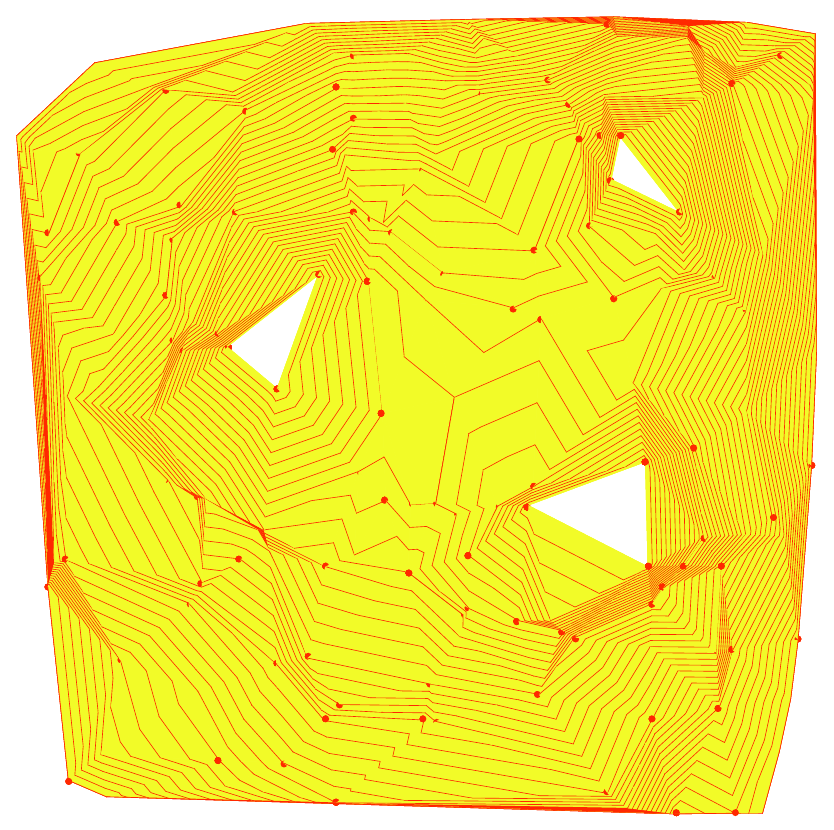}
           \caption{Level curves for the D-BVP}
      \label{fig-4level}
    \end{center}
  \end{minipage}
 \hfill 
 \begin{minipage}[t]{.55\textwidth}
    \begin{center}  
      \includegraphics[width=3in]{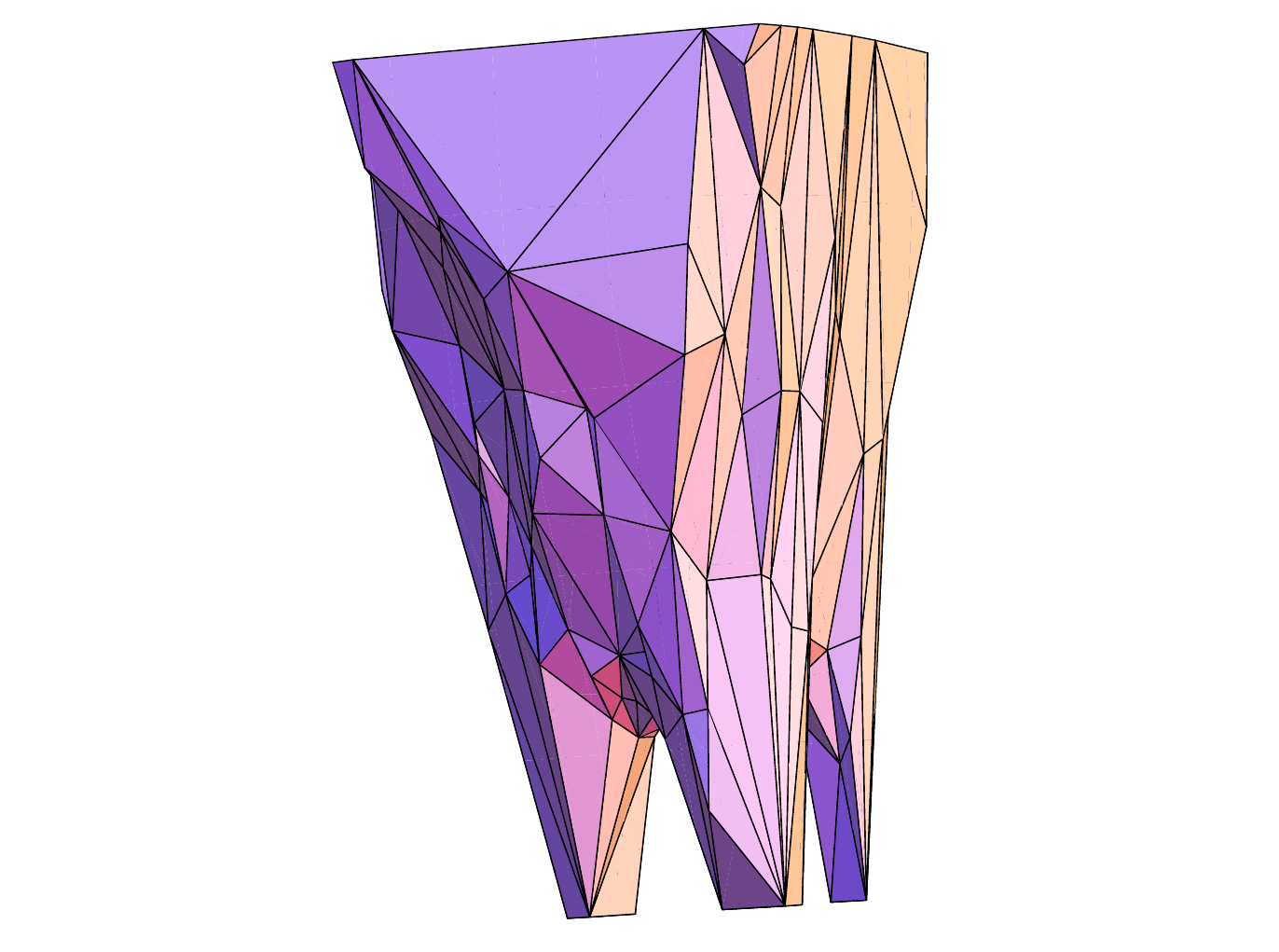}
      \caption{The surface obtained by using $g$ as a height function.}
      \label{fig-4surface}
    \end{center}
  \end{minipage}
  \end{figure}

\medskip

\begin{Def}
\label{de:bouquet}
A generalized piecewise bouquet of circles will denote a union of piecewise bouquets of circles  where the intersection of any two bouquets is at most a vertex. 
Moreover, all such tangencies are exterior, i.e., no circle is contained in the interior
of the bounded component of another. 
\end{Def}

\begin{Pro}
\label{pr:structure}
Two simple cycles which correspond to the same $g$ value either are disjoint or intersect at a single vertex. Furthermore, under these assumptions, a simple cycle is never contained in the annulus bounded by a different one (i.e. a ``tangency" is always external).
\end{Pro}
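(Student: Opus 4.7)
The plan is to combine the piecewise-affine structure of $\bar g$ with the discrete maximum principle (as embodied in Definition~\ref{def:modifiedsolution} and Equation~(\ref{eq:prehar})) and with the connectedness of the superlevel set. I would proceed in three steps.

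First, I would argue that any shared point of two distinct simple cycles $L_1, L_2 \subset \{\bar g = c\}$ must be a vertex of ${\mathcal T}^{(0)}$. Indeed, $\bar g$ is affine and non-constant on every 2-cell of ${\mathcal T}$ (since adjacent vertices of ${\mathcal T}$ carry distinct values by the blanket convention of Section~\ref{se:PL level sets}), so $\{\bar g = c\}$ meets each 2-cell in a single line segment and each edge in at most one interior point. Away from ${\mathcal T}^{(0)}$ the level set is locally a unique smooth arc; two distinct simple cycles sharing a regular point would therefore coincide along the whole arc through that 2-cell, and the coincidence would propagate across 2-cells until terminating at a singular vertex, contradicting distinctness in a trivial neighborhood.

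Second, I would establish that the superlevel set $\{\bar g > c\}$ is connected, by the same maximum principle argument used in the proof of Lemma~\ref{le:simpleisclosed}: a putative component disjoint from $E_1$ would have its entire boundary on $\{\bar g = c\}$, and cutting along that boundary via Equation~(\ref{eq:prehar}) yields a modified D-BVP with boundary value $c$, forcing $\bar g \equiv c$ on the component by uniqueness — contradicting non-constancy of $\bar g$ on any 2-cell. This connectedness rules out internal tangency and nesting: if $L_2$ were contained in the closed disk $\bar D_1$ bounded by $L_1$, then $\{\bar g > c\}$, which must reach $E_1$ lying outside $D_1$, could not enter the interior of $D_1$ because $L_1$ separates the plane. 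Consequently $\{\bar g > c\}$ would fail to border $L_2$ on either side, contradicting the defining property of a level-$c$ cycle (namely that $\{\bar g > c\}$ and $\{\bar g < c\}$ meet it from opposite sides).

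Third, to rule out two intersection vertices, suppose $L_1$ and $L_2$ shared singular vertices $v \neq w$. Then $L_1 \cup L_2$ is a planar multigraph on $\{v, w\}$ with four arcs, hence by Euler's formula has exactly three bounded faces $R_1, R_2, R_3$. Since $\{\bar g > c\}$ lies in the unbounded face (containing $E_1$), each $R_i$ satisfies $\bar g \leq c$ throughout its interior, and by the max/min principle (together with non-constancy of $\bar g$ on 2-cells) each $R_i$ must enclose at least one boundary component of $\Omega$ on which $g = 0$. Combined with Lemma~\ref{le:simpleisclosed} applied to the six simple cycles obtained by pairing arcs of $L_1$ with arcs of $L_2$, and tracking the cyclic order of the branches of $\{\bar g = c\}$ at $v$ and at $w$, an innermost-face choice produces a bounded face containing no boundary component of $\partial \Omega$ — and the uniqueness argument of the second step then forces $\bar g \equiv c$ on that face, a contradiction.

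The main obstacle is the combinatorial bookkeeping in the third step: one must enumerate how the four arcs at $v$ (and at $w$) are cyclically arranged in the plane and identify, among the three bounded faces, an innermost one free of boundary components of $\partial \Omega$. Once that choice is made, the maximum principle and modified-solution framework of Section~\ref{se:BVP} close the argument uniformly across all three steps.
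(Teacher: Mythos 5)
Your step 2 is sound and is a genuinely different route for the non-intersecting/nested part of the statement: the paper handles a nested disjoint pair by observing that the modified harmonic function on the annulus between them has equal boundary values, hence is constant, which is absurd; you instead use connectedness of the superlevel set $\{\bar g>c\}$ (itself a maximum-principle/uniqueness argument) plus the fact that a level-$c$ cycle must be bordered by $\{\bar g>c\}$ at any of its non-vertex points. That part would survive scrutiny. The intersection part, however, has two genuine gaps. First, your step 1 does not dispose of shared arcs: the propagation argument only shows that two cycles sharing a regular point share a maximal arc whose endpoints are singular vertices, and this does not contradict distinctness, because at a singular vertex the level set has $\mbox{\rm Sgc}_g(v)\geq 4$ branches and the two cycles may simply diverge there. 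This case is real (it is exactly the ``$L_1\cap L_2$ is a piecewise arc'' case the paper treats first), and the paper kills it locally: the link of an endpoint $I$ of the shared arc contains a $2$-cell with an edge crossing the level curve, forcing a vertex of $g$-value greater than $l$ in a region where the maximum principle forces all values to be strictly smaller than $l$. You have no substitute for this.

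Second, the mechanism you propose in step 3 does not produce a contradiction. If $L_1$ and $L_2$ meet at exactly two vertices, all three bounded faces of $L_1\cup L_2$ can perfectly well each enclose an inner boundary component of $\partial\Omega$ (a Venn-type crossing with three holes); in that configuration every one of the six cycles obtained by pairing arcs encloses a hole, so Lemma~\ref{le:simpleisclosed} is satisfied and no ``innermost face free of boundary components'' exists. The actual contradiction is local at the shared vertex $v$: the sectors on which $\bar g>c$ and $\bar g<c$ alternate around $v$, so there are at least two superlevel sectors, while by your own step 2 every bounded face of $L_1\cup L_2$ incident to $v$ is sub-level and at most one incident sector lies in the unbounded face. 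This is precisely the paper's link-of-the-vertex argument combined with the maximum principle, and it is the step you defer as ``combinatorial bookkeeping''---it is the heart of the proof, not bookkeeping. (It is also what handles three or more shared vertices and tangential, non-crossing double contacts, which your Euler count for exactly two internally disjoint pairs of arcs does not cover.) As written, the proposal proves the nesting clause but not the clause that two cycles meet in at most one vertex.
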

\begin{proof}
Let $L_1$ and $L_2$ be two given simple cycles corresponding to the $g$-value $l$. Let $A_1$ and $A_2$ be the two (piecewise) annuli which they bound respectively. Assume first that $A_1\cap A_2=\emptyset$ and that  $E=L_1\cap L_2$ is a piecewise arc. Let $I$ and $J$ be the endpoints of $E$.     As mentioned before (see Lemma~\ref{le:simpleisclosed}), both $I$ and $J$ are vertices. The link of the vertex $I$ must contain a two cell (a triangle or a quadrilateral) such that an edge between two of its vertices crosses $E$. This implies that one of these vertices has $g$-value which is larger than $l$ and the other has $g$-value which is smaller than $l$. This is absurd, since by the maximum principle all the vertices in $A_1\cup A_2$ which do not belong to $g^{-1}(l)$ have $g$-values that are strictly smaller than $l$ (none of the vertices of these cells  belong to $g^{-1}(l)$).  
It may also happen that  $L_1\cap L_2= \{P_1,P_2,\ldots, P_k\}$, where $k\geq 1$ is an integer, and where the points $P_i$ are oriented clockwise.  Let $t_2$ be the arc connecting $P_1$ to $P_2$ which belongs to $L_2$ and which does not pass through any other of the $P_I$'s. Let $t_1$ be the arc in $L_1$ which connects $P_1$ to $P_2$ which does not pass through any of the $P_i$'s. Then $t_1\cup t_2$ is a simple closed level curve which bounds an annulus  (see Lemma~\ref{le:simpleisclosed}).
The link of the vertex $P_1$ must contain a triangle or a quadrilateral with one of its edges crossing $t_1$.   As above, this leads to a violation of the maximum principle.

Assume now that $A_1\cap A_2\neq\emptyset$  and (without loss of generality) that $A_1\cup L_1$ contains an arc $t$ of $L_2$. Let its endpoints be $P$ and $Q$ (they both lie on $L_1$). Then $L_1\cup ((L_2\setminus t)\cup E)$ 
consists of two simple cycles that satisfy all the conditions described in the case above; hence, this case may not occur.  

We now consider the case in which one of the annuli is contained in the other. Without loss of generality, assume that $A_1\subset A_2$ and that $E=L_1\cap L_2$ is a piecewise arc. Let $I$ and $J$ be the endpoints of $E$. The link of the vertex $I$ must contain a two-cell, included in $A_2$, where one of its edges connects a vertex in $A_1$ to a vertex  in $A_2\setminus (A_1\cup L_1)$.  This edge crosses $L_1$; hence one of these vertices has $g$-value which is larger than $l$ and the other has $g$-value which is smaller than $l$; this is absurd, since by the maximum principle all the vertices in $A_2$ which do not belong to $g^{-1}(l)$ must have their $g$ value strictly smaller than $l$. It may also be the case that $L_1\cap L_2= \{P_1,P_2,\ldots, P_m\}$ where $m\geq 1$ is an integer and the points $P_i$ are oriented clockwise. Note that we allow all the $P_i$'s to be a single vertex. Consideration of the link of any of the $P_i$'s, and an argument similar to the one above, leads again to a violation of the maximum principle.

Finally, assume that $L_1\cap L_2=\emptyset$. Observe that $A_2\setminus A_1$ is an annulus; both of its boundary curves have $g$ value $l$. By a construction analogous to the one described before
Definition~\ref{def:modifiedsolution}, we obtain a new BVP problem defined on a cellular decomposition of $(A_2\setminus A_1) \cup L_1\cup L_2$. Its solution is harmonic on $A_2\setminus A_1$ and has the same value on the boundary components; hence it is the constant function. The cellular decomposition of the
annulus contains edges (part of edges) or vertices (and perhaps both) of the initial cellular decomposition. Moreover, the values of the solution and $g$ coincide on
$(A_2\setminus A_1) \cup L_1\cup L_2$. Both cases lead to the conclusion that connected pieces of edges of the initial cellular decomposition in the annulus have the same $g$-value, which is absurd.

\end{proof}

\begin{Thm}
\label{th:notsimple}
Let $L$ be a connected level curve for $g$. Then each connected component of $L$ is a generalized piecewise bouquet of circles. 
\end{Thm}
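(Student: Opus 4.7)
The plan is to combine a local analysis of $L$ near each of its points with Proposition~\ref{pr:structure}.

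First, I analyze the local structure of $L$. If $p\in L$ lies in the interior of a $2$-cell $\Delta$, then $\bar g|_\Delta$ is affine and non-constant (adjacent vertices carry distinct $g$-values), so $L\cap\Delta$ is a single line segment through $p$; if $p$ lies on an edge $e$ but is not a vertex of $\mathcal{T}^{(0)}$, then $\bar g|_e$ is linear with a transversal zero at $p$, and the segments of $L$ in the two $2$-cells meeting along $e$ glue transversely at $p$ to form a single arc; finally, if $p=v\in L\cap\mathcal{T}^{(0)}$, a cell-by-cell count in the star of $v$ of the $2$-cells whose two neighbors of $v$ lie on opposite sides of $l$ gives, as in \cite[Section 3]{LaVe}, that exactly $\mathrm{Sgc}_g(v)$ arcs of $L$ emanate from $v$. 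Thus points of $C$ have local degree $2$, except at singular vertices of $\mathcal{T}^{(0)}$ lying in $C$, where the degree is $2k\ge 4$.

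Second, once this local picture is in place, $C$ is a finite, connected, planar $1$-complex every vertex of which has even degree. Euler's classical result then yields an edge-disjoint decomposition $C=\gamma_1\cup\cdots\cup\gamma_N$ into simple cycles, each of which is a simple closed level curve of $g$ at value $l$ in the sense of Lemma~\ref{le:simpleisclosed}.

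Third, I apply Proposition~\ref{pr:structure} to every pair $(\gamma_i,\gamma_j)$: the simple cycles are pairwise either disjoint or meet at a single vertex, and any such tangency is external. Grouping at each singular vertex of $C$ the cycles that pass through that vertex into a piecewise bouquet of circles, and noting that two such bouquets meet one another in at most a single vertex (again by Proposition~\ref{pr:structure}), displays $C$ as a generalized piecewise bouquet of circles in the sense of Definition~\ref{de:bouquet}.

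The main obstacle is the vertex-by-vertex local count identifying the graph-theoretic degree of $v\in L\cap\mathcal{T}^{(0)}$ in $C$ with the combinatorial quantity $\mathrm{Sgc}_g(v)$: this needs separate inspection of triangular and quadrilateral $2$-cells, and one must check that the arcs entering adjacent $2$-cells from $v$ match up consistently without creating extra pseudo-arcs. Once this local count and its even-degree consequence are secured, the remainder is the standard Eulerian cycle decomposition together with a direct invocation of Proposition~\ref{pr:structure}.
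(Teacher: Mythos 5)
Your proof is correct and takes essentially the same route as the paper's: decompose the level curve into simple closed polygonal cycles and then apply Proposition~\ref{pr:structure} to force any two such cycles to be disjoint or to meet in a single vertex with external tangency, which is exactly the generalized bouquet structure. The only difference is that you justify the cycle decomposition explicitly (local arc count equal to $\mathrm{Sgc}_g(v)$ at vertices, hence even degree everywhere, then an Eulerian/Veblen decomposition into edge-disjoint simple cycles), a step the paper simply records as well known for closed polygonal curves.
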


\begin{proof}
First, we  only use the fact that $L$ is a closed polygonal line.
Let ${\mathcal S}$ be the collection of all the self  intersection points of $L$. By definition, edges of the polygonal (closed) curve $L$ may cross only at points from ${\mathcal S}$. 
If ${\mathcal L}$ is simple, the assertion follows immediately. Assume that $L$ is not simple.  
It is well known that we  may express $L$ as a union of simple close polygonal curves. The intersection of any two cycles in this decomposition is a union  of vertices and edges.  
Proposition~\ref{pr:structure} forces restrictions on any such decomposition. In particular, any two simple  cycles in such a decomposition are either disjoint or intersect in a single vertex, none of which is contained in the annuli bounded by the other. The assertion of the theorem follows immediately.
\end{proof}

\begin{figure}[h]
  \begin{minipage}[t]{.45\textwidth}
    \begin{center}  
       \includegraphics[width=2in]{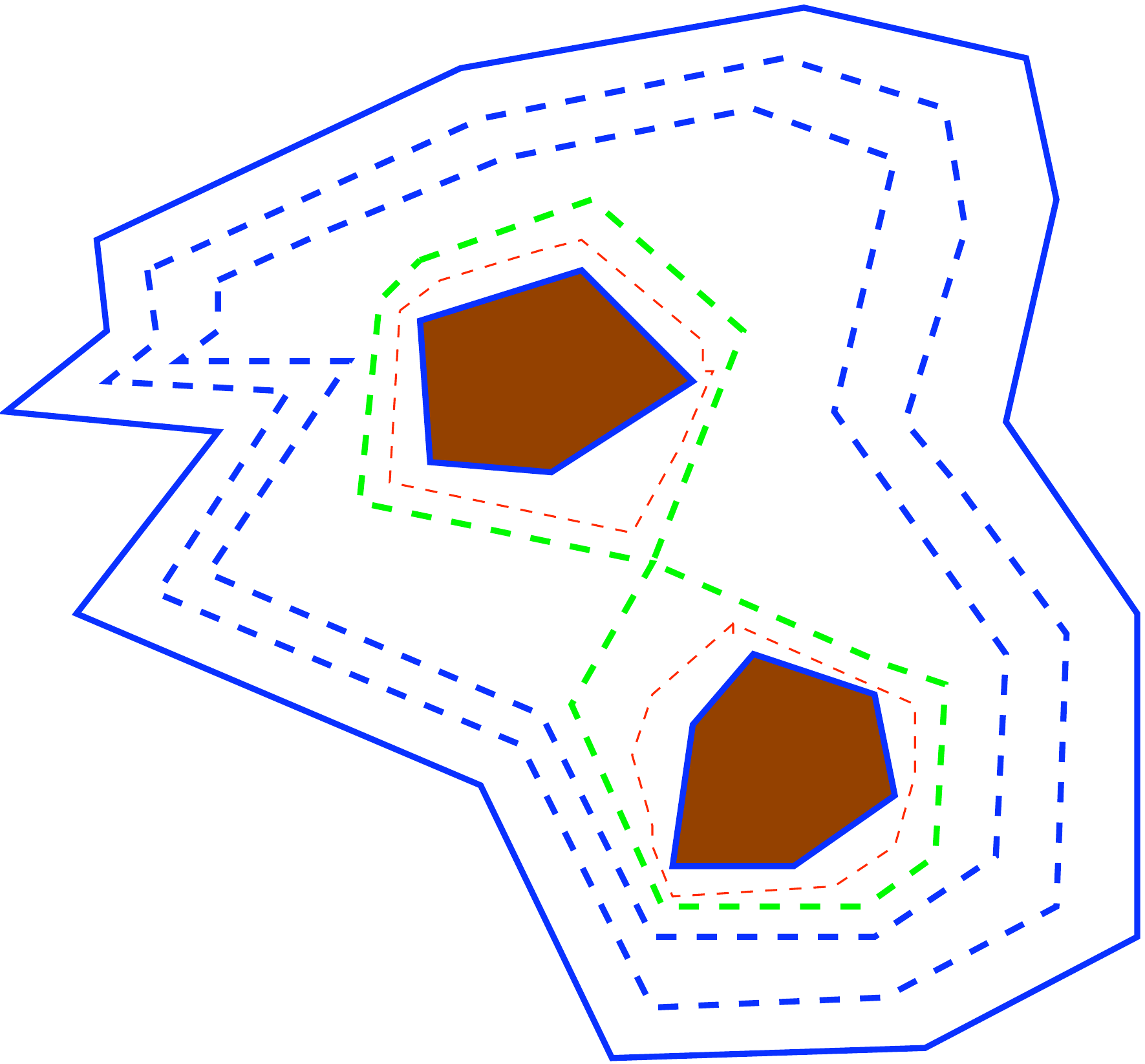} 
      \caption{Level curves in a pair of pants}
      \label{fig-pants}
    \end{center}
  \end{minipage}
 \hfill 
  \begin{minipage}[t]{.45\textwidth}
   \begin{center}  
      \includegraphics[width=2in]{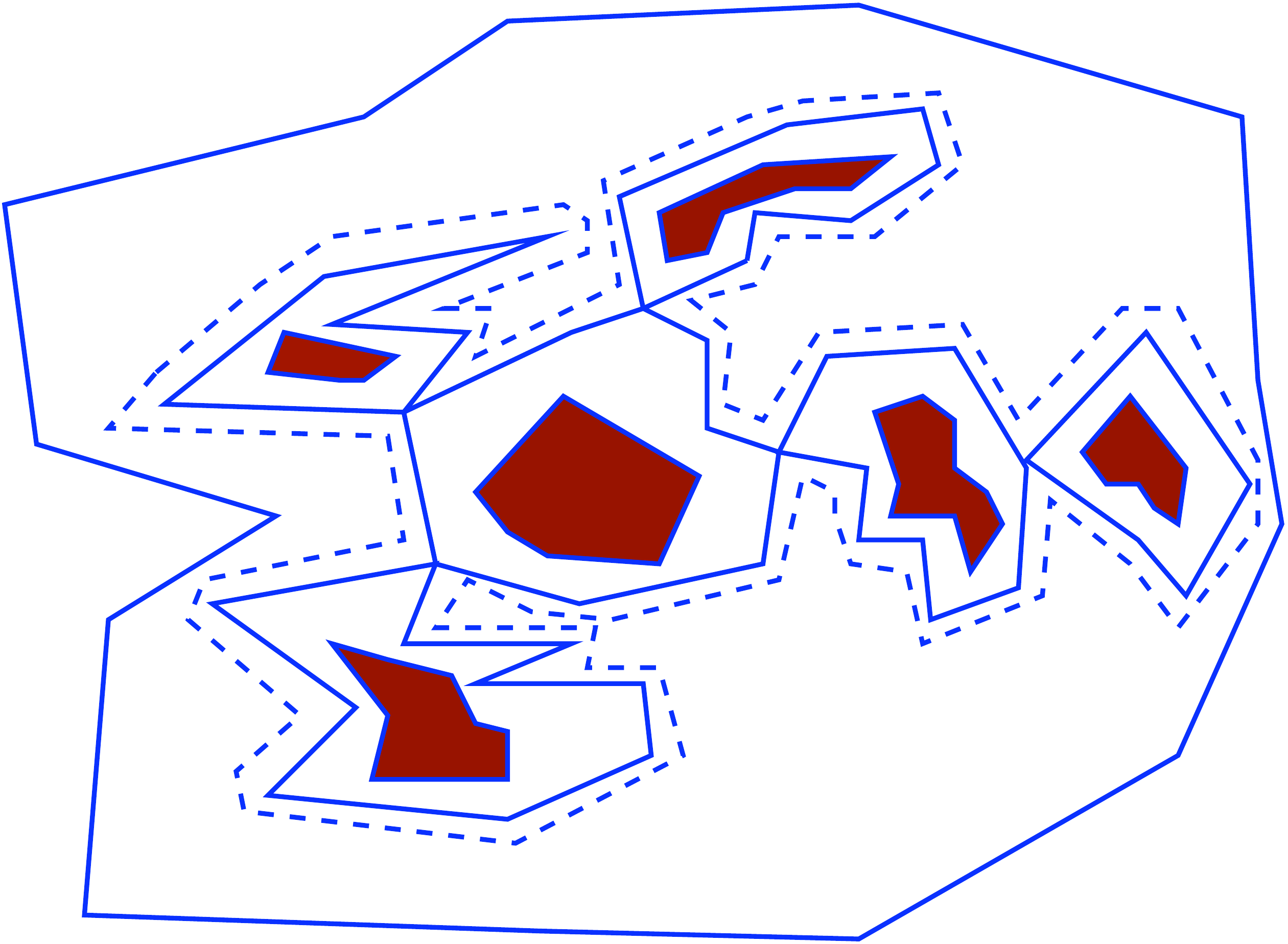}
     \caption{Level curves in a 7 connected domain}
      \label{fig-six}
    \end{center}
  \end{minipage}
 \hfill 
 \begin{minipage}[t]{.55\textwidth}
    \begin{center}  
      \includegraphics[width=2in]{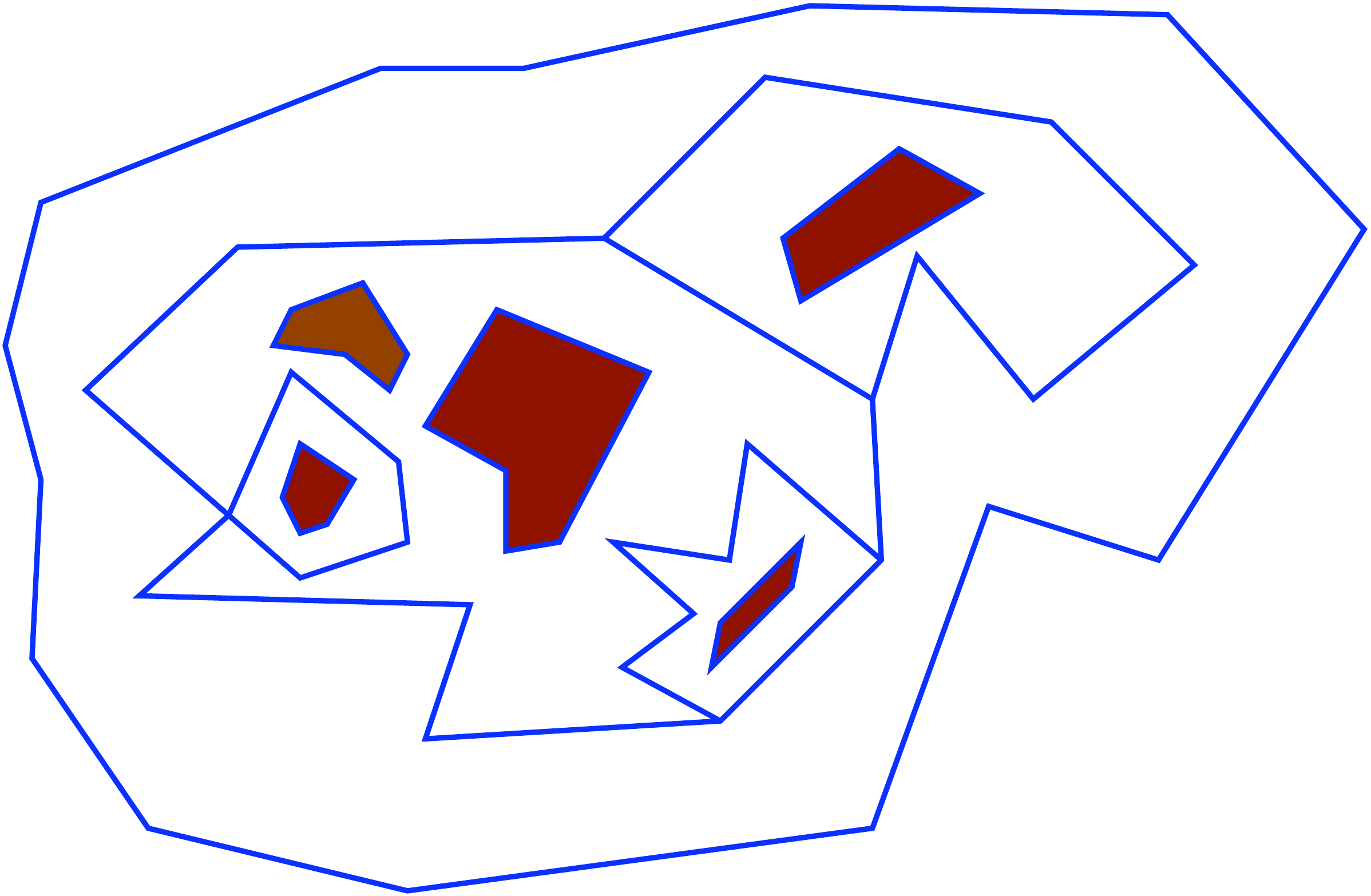}
      \caption{A non-possible level curve in a 6 connected domain}
      \label{fig-sevent}
    \end{center}
  \end{minipage}
  \end{figure}

\begin{Rem}
\label{re:complexity}
Our interest here is in the existence of a decomposition of the level curve into simple pieces. One may also consider the question, or rather a series of questions, regarding a decomposition of the non-simple polygon determined by a general closed polygonal curve. For many deep questions and an excellent survey on this interesting subject (some of the questions involved are known to be {\it NP}), see for instance \cite{Kei}. 
\end{Rem}

The topological structure of a general connected component of a level curve $L$ (Theorem~\ref{th:notsimple}) means that the complement of $L$ in $\Omega \cup \partial\Omega$ is composed of a bounded domain (which is a disjoint union of annuli) in which all vertices have $g$-values smaller than the $g$-value of $L$, and a second domain whose boundary consists of $L$ and possibly other curves in $\partial \Omega$), where $L$ is in the boundary of both. Following a construction similar to the one preceding  
Definition~\ref{def:modifiedsolution},  we may now define two modified conductance functions and two solutions of D-BVP problems.  By abuse of notation, we call the first domain ${\mathcal O}_1$, the second ${\mathcal O}_2$, and we let  ${\mathcal C}_{{\mathcal O}_1}, {\mathcal C}_{{\mathcal O}_2}, g_1, g_2$ be the analogous  quantities. The corresponding two new cellular decompositions and their union will henceforth be called the {\it induced} cellular decomposition.

\begin{Thm}
\label{le:length of pl}
Let $L$ be a level curve for $g$.  Then the following equality holds
\begin{equation}
\label{eq:samelength}
l_{{\mathcal O}_1} (L)= l_{{\mathcal O}_2} (L), 
\end{equation}
where the left-hand side denotes the length of $L$  measured with respect to the flux-gradient metric induced by $g_1$, and the right-hand side denotes the length of $L$  measured with respect to the flux-gradient metric induced by $g_2$.
\end{Thm}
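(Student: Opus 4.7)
The plan is to establish the identity point by point on $L$: at every vertex $x \in L$ I will show that $\frac{\bord g_{1}}{\bord n}(\mathcal{O}_{1})(x)$ and $\frac{\bord g_{2}}{\bord n}(\mathcal{O}_{2})(x)$ are negatives of each other. Since all contributions on one side have a common sign (positive from the interior, negative from the exterior), the absolute value of the sum equals the sum of absolute values, and the claimed equality of lengths follows directly from Equation~(\ref{eq:length gradient metric}).

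To carry this out, I first classify the points of $L$ on the induced cellular decomposition into two types. Type (a) consists of the new vertices $x$ produced when $L$ crosses an edge $(v,u)$ of $\mathcal{T}^{(1)}$ transversely, with $v\in\mathcal{O}_{1}$ and $u\in\mathcal{O}_{2}$. Type (b) consists of original vertices through which $L$ passes (in particular the bouquet vertices produced by the structure theorem, Theorem~\ref{th:notsimple}). For a type (a) point $x$, the only neighbor inside $\mathcal{O}_{1}$ is $v$, so using the definition in Equation~(\ref{eq:prehar}) together with $g_{i}=g|_{\mathcal{O}_{i}}$,
\begin{equation*}
\frac{\bord g_{1}}{\bord n}(\mathcal{O}_{1})(x)=\tilde c(v,x)\bigl(g_{1}(x)-g_{1}(v)\bigr)=\frac{c(v,u)(g(v)-g(u))}{g(v)-g(x)}\bigl(g(x)-g(v)\bigr)=c(v,u)\bigl(g(u)-g(v)\bigr),
\end{equation*}
and the analogous computation on the $\mathcal{O}_{2}$ side yields $\frac{\bord g_{2}}{\bord n}(\mathcal{O}_{2})(x)=-c(v,u)(g(u)-g(v))$, which is the desired sign flip.

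For a type (b) point $x$, the standing assumption that adjacent vertices carry distinct $g$-values implies that every neighbor $y$ of $x$ in $\mathcal{T}^{(1)}$ lies strictly in either $\mathcal{O}_{1}$ or $\mathcal{O}_{2}$; none lie on $L$. Moreover, for edges of $\mathcal{T}^{(1)}$ that are not crossed by $L$, the induced conductance is the original $c$. Consequently,
\begin{equation*}
\frac{\bord g_{1}}{\bord n}(\mathcal{O}_{1})(x)+\frac{\bord g_{2}}{\bord n}(\mathcal{O}_{2})(x)=\sum_{y\sim x}c(x,y)\bigl(g(x)-g(y)\bigr)=\Delta g(x)=0,
\end{equation*}
since $g$ is harmonic at the interior vertex $x$. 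This is precisely the desired identity. It works identically at a bouquet vertex, where the same argument is applied at a single original vertex regardless of how many simple cycles of $L$ meet there, because harmonicity is a purely local relation at $x$.

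Finally, I combine the two cases: at every $x\in L$ one has $\frac{\bord g_{1}}{\bord n}(\mathcal{O}_{1})(x)>0$, $\frac{\bord g_{2}}{\bord n}(\mathcal{O}_{2})(x)<0$, and the two differ only in sign. Summing and taking absolute values gives
\begin{equation*}
l_{\mathcal{O}_{1}}(L)=\sum_{x\in L}\frac{\bord g_{1}}{\bord n}(\mathcal{O}_{1})(x)=-\sum_{x\in L}\frac{\bord g_{2}}{\bord n}(\mathcal{O}_{2})(x)=l_{\mathcal{O}_{2}}(L).
\end{equation*}
The main obstacle is bookkeeping at the type (b) vertices, especially at bouquet points where several simple cycles of $L$ are incident: one must verify that the local harmonicity relation cleanly splits into the two sign classes corresponding to $\mathcal{O}_{1}$ and $\mathcal{O}_{2}$, and that the induced conductances on the uncrossed edges are unchanged. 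Both are guaranteed by the dichotomy on $g$-values of neighbors and by construction of $\mathcal{C}_{\mathcal{O}_{i}}$, so the argument closes without further machinery.
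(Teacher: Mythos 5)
Your proposal is correct and follows essentially the same route as the paper's own proof: the computation at the new crossing vertices via Equation~(\ref{eq:prehar}) reproduces Equations~(\ref{eq:newvertex})--(\ref{eq:newvertex1}), and splitting the harmonicity relation at original vertices of $L$ into the $\mathcal{O}_1$- and $\mathcal{O}_2$-neighbors is exactly Equations~(\ref{eq:haratx1}) and (\ref{eq:haratx3}), summed over $L$. Your uniform treatment of the bouquet vertices merely collapses the paper's circle-by-circle bookkeeping, and the sign discussion is not even needed since the lengths are absolute values of totals that are negatives of each other.
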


\begin{proof}
First, let us assume that $L$ is simple and corresponds to the $g$-value $h$.
Since $L$ is at combinatorial distance at least one from any other level curves of $g$ of the same value,  the $1$-combinatorial neighborhood of $L$ in ${\mathcal O}_2$ is comprised of vertices that are of $g$-values greater than $h$.

Let $x\in {\mathcal T}^{(0)}\cap L$. Let $y_1,y_2$ be its neighbors in $L$, $\{x_1,\ldots,x_m\}$ in  ${\mathcal O}_1$ and  $\{z_1,\ldots,z_k\}$ in ${\mathcal O}_2$. Since $g$ is harmonic at $x$ we have 
\begin{equation}
\label{eq:haratx}
0=\Delta g(x)=\sum_{y\sim x}c(x,y)\left( g(x)-g(y) \right ).
\end{equation} 
Hence, (since $g(x)=g(y_1)=g(y_2)$)
\begin{equation}
\label{eq:haratx1}
0=\sum_{i=1}^{m}c(x,x_i)\left( g(x)-g(x_i) \right )+\sum_{j=1}^{k}c(x,z_j)\left( g(x)-g(z_j) \right ).
\end{equation} 

Let $x\in{\mathcal T}_{{\mathcal O}_1}\cup {\mathcal T}_{{\mathcal O}_2}$ be a new vertex  in $L$ (that is $x\not \in {\mathcal T}^{(0)}$) with $v\in {\mathcal O}_1$ and $u\in {\mathcal O}_2$. We have

\begin{equation}
\label{eq:newvertex}
\frac{\partial {g_1}}{\partial n}({\mathcal O}_1)(x)=\tilde c(v,x) (g(x)-g(v))=c(u,v) (g(u)-g(v)),
\end{equation}
and
\begin{equation}
\label{eq:newvertex1}
\frac{\partial {g_2}}{\partial n}({\mathcal O}_2)(x)=\tilde c(u,x) (g(x)-g(u))=c(u,v) (g(v)-g(u)).
\end{equation}
By summing the three equations above over all vertices in $L$, the assertion follows.

By combining the arguments above with the topological structure of a general level curve provided by Theorem~\ref{th:notsimple} and a simple modification at singular vertices along $L$, we will now treat the second case. Let us assume that $L$ is not simple. 
Recall that a level curve may intersect itself only at a vertex. Let 
${\mathcal S}=\{v_1,\ldots,v_m\}$ be all such intersection points and $\{\mbox{\rm Ind}_{f}(v_1),\ldots,\mbox{\rm Ind}_{f}(v_m)\}$ be their indices respectively. For a vertex $x\in L$ (of the initial triangulation or the induced one) which is not in ${\mathcal S}$ (note that $\mbox{\rm Ind}_{f}(x)=0$), the arguments above leading to Equation~\ref{eq:haratx} go through precisely in the same manner and yield (with ${\mathcal O}_1,{\mathcal O}_2$ modified as explained immediately after Remark~\ref{re:complexity}) the same conclusion.


Let $v\in {\mathcal S}$. Then $v$ is in the intersection of 
$C_1,\ldots, C_l$ piecewise simple circles where, 
$l=\mbox{\rm Sgc}_{g}(v)/2$ circles. Let $\{x_1^i,\ldots,x_{k(i)}^{i}\}$ be the neighbors of $v$ in ${\mathcal O}_{1}\cap\  \mbox{\rm interior}(C_i)$, $\{y_1, \ldots, y_m\}$  the neighbors of $v$ in $L$ and $\{z_1,\ldots, z_p\}$ in ${\mathcal O}_2$. Since $g$ is harmonic at $v$, we may now modify slightly   Equation~(\ref{eq:haratx1}).  
Since $g(v)=g(y_1)=\ldots=g(y_m)$, we obtain
\begin{equation}
\label{eq:haratx3}
0=\sum_{j=1}^{l}\sum_{i=1}^{k(j)}c(v,x_i^j)\left( g(v)-g(x_{i}^{j}) \right )+\sum_{j=1}^{p}c(v,z_j)\left( g(v)-g(z_j) \right ).
\end{equation} 
The assertion of the theorem now follows by summing over all the vertices in $L$.
Finally, if $L$ has several connected components, the assertion of the theorem follows by addition over each component (which must be at combinatorial distance greater than one from each other). 

\end{proof}

\begin{Rem}
\label{re:adopttoboundary}

We now wish to start using Theorem~\ref{Th:index} and 
Remark~\ref{re:withboundary}. One should note that since boundary components are at the same $g$-level, a modification is needed. This is done in the following way. Along each boundary component, add a piecewise linear curve homotopic to it, and at combinatorial distance which is smaller than one. This can be done in such a way that there is exactly one sign change at each new vertex (not taking into account vertices on the boundary). Let $\tilde{\Omega}\in \Omega$ be the region bounded by these curves.  Doubling along the boundary of $\tilde{\Omega}$ gives a surface without boundary with all of its singular vertices identical to those of $g$, for which the above mentioned theorem may be applied.
\end{Rem}

We end this section with a useful property of  level curves. This property is essential in the proofs of Theorem~\ref{Th:pair} and Theorem~\ref{Th:ladder}. In the course of these proofs, we will need to know that there is a  singular level curve which encloses all of the interior components of $\partial\Omega$. This level curve (which is necessarily singular) is the one along which we will cut the surface. In the inductive process, we keep cutting along such level curves over domains of fewer boundary components until the remaining pieces are annuli.

\begin{Pro}
\label{pr:onethatenclose}
With notation as above, there exists a singular level curve which contains in the   interior of the domain it bounds,  all of the inner boundary components of $\partial\Omega$. Such a level curve is unique. 
\end{Pro}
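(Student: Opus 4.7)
The plan is to analyze the sublevel-set filtration $V_t = \{x \in \Omega \cup \partial\Omega : {\bar g}(x) < t\}$, where ${\bar g}$ is the affine extension of $g$ from Section~\ref{se:Intro}, and to identify the desired singular level curve as the boundary of the first sublevel set that becomes connected. By the maximum principle applied on each connected component of $V_t$ together with the boundary conditions $g|_{E_1}=k$ and $g|_{E_2}=0$, every connected component of $V_t$ must contain a piece of $E_2$ in its closure. Consequently, for $t$ just above $0$ the set $V_t$ consists of exactly $m-1$ small open neighborhoods of the inner boundary components $E_2^1,\ldots,E_2^{m-1}$, while for $t$ just below $k$ the set $V_t$ is connected. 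Moreover, since interior local minima of ${\bar g}$ are forbidden by the maximum principle, no new component of $V_t$ can appear as $t$ increases, so the component count of $V_t$ is monotonically non-increasing in $t$ and is constant between consecutive singular levels of ${\bar g}$.

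For existence I would introduce the critical level $t^* = \inf\{t \in (0,k) : V_t \text{ is connected}\}$ and let $L^*$ denote the outer boundary in $\Omega$ of the unique connected component of $V_{t^*}$ containing $E_2$. Between consecutive singular levels of ${\bar g}$, the sets $V_t$ are related by the natural affine sweep over each $2$-cell and hence by a local isotopy, so the component count can change only across a singular level, forcing $t^*$ itself to be such a level and $L^*$ to contain a singular vertex of ${\bar g}$. By Theorem~\ref{th:notsimple} the curve $L^*$ is a generalized piecewise bouquet of circles, and because the merging event at $t^*$ unites all components of $V_{t^*-\varepsilon}$ (each carrying some $E_2^i$ by the structural remark above) into a single component of $V_{t^*}$, the union of disks bounded by the lobes of $L^*$ must contain every inner boundary component.

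For uniqueness I would suppose that $L'$ is any singular level curve at some level $t'$ whose bounded domain contains all of $E_2^1,\ldots,E_2^{m-1}$. By Proposition~\ref{pr:structure} the lobes of $L'$ bound pairwise disjoint disks, each lying in $V_{t'}$ and containing some of the $E_2^i$. If $t' < t^*$, then by definition $V_{t'}$ is disconnected; the merging event at the singular vertex carried by $L'$ can therefore involve only a proper subset of the components of $V_{t'-\varepsilon}$, so $L'$ can enclose only a proper subset of the $E_2^i$, a contradiction. If $t' > t^*$, then $V_{t'}$ is already connected, and the monotonicity of the component count (no splits, no new components) forbids any topological transition above $t^*$, so no singular vertex of ${\bar g}$ lies above $t^*$; again a contradiction. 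Thus $t' = t^*$, and since $L^*$ is the boundary of the unique merged component at this level, we conclude $L' = L^*$.

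The main obstacle I anticipate is rigorously justifying, in the present piecewise-affine framework, that $V_t$ is isotopic across regular values of ${\bar g}$, so that the component count is truly locally constant and can change only at a singular vertex. Carrying this out will require a careful vertex-by-vertex local analysis, in the spirit of the sign-change arguments underlying Theorem~\ref{th:notsimple}, combined with the observation that interior extrema of ${\bar g}$ are excluded by the maximum principle; once this is in hand, the remaining pieces of the argument are straightforward applications of Theorem~\ref{th:notsimple} and Proposition~\ref{pr:structure}.
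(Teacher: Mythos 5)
Your sweep of the sublevel sets $V_t=\{{\bar g}<t\}$ is a genuinely different strategy from the paper's (which argues by contradiction, summing indices of singular vertices on and inside a putative singular curve and invoking the index formula of Theorem~\ref{Th:index} repeatedly), and the existence half can probably be pushed through along your lines. But there is a concrete gap in the uniqueness half: you deduce from ``the component count of $V_t$ is monotone and constant above $t^*$'' that \emph{no singular vertex of ${\bar g}$ lies above $t^*$}. That inference does not follow. A vertex $v$ with $\mbox{\rm Sgc}_g(v)=4$ at a level $t'>t^*$ whose two local sublevel sectors already lie in the same global component of $V_{t'}$ produces no merge and leaves the component count unchanged, so monotonicity alone cannot exclude it; yet such a vertex would carry a singular level curve and could, a priori, wreck uniqueness. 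To rule it out you need an additional input: either the counting that the paper performs --- $\sum_v \mbox{\rm Ind}_g(v)=\chi(\Omega)=2-m$ is \emph{exactly} the total index needed to account for the $m-2$ merges, so no singular vertex can fail to merge distinct components --- or a planarity/maximum-principle argument showing that a non-merging singular vertex would create a loop in $V_{t'}\cup\{v\}$ enclosing a component of the superlevel set $\{{\bar g}>t'\}$, which by the maximum principle must contain a vertex of $E_1$ and hence cannot be enclosed. Without one of these, the case $t'>t^*$ in your uniqueness argument is unsupported; the case $t'<t^*$ also deserves a cleaner statement (if a connected singular level curve at level $t'$ enclosed all inner components, then $V_t$ would be connected for $t$ slightly above $t'$, forcing $t^*\le t'$), since ``the merge involves a proper subset, hence $L'$ encloses a proper subset'' is not an immediate implication.

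Two further points of care. First, $V_{t^*}$ (the strict sublevel set at the critical value) is in general still disconnected --- the merging happens through the level set itself --- so ``the unique connected component of $V_{t^*}$ containing $E_2$'' is not well defined; you should work with $\{{\bar g}\le t^*\}$ or with $V_{t^*+\varepsilon}$. Second, the isotopy/regular-interval claim that you flag yourself (component count can change only across a singular level) is indeed needed and is not free in the piecewise-affine setting; it is essentially a PL Morse lemma and should be proved by the sign-change analysis at vertices, using Lemma~\ref{le:simpleisclosed}, Proposition~\ref{pr:structure} and Theorem~\ref{th:notsimple} to control regular level curves. Once the non-merging singular vertices are excluded and these points are repaired, your filtration argument gives an alternative, more dynamic proof than the paper's static index count, and it has the advantage of locating the desired curve explicitly as the level of the last merge; but as written the uniqueness step has a genuine hole.
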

\begin{proof}
We argue by contradiction.  Throughout the proof, we use the topological structure of level curves  provided by Theorem~\ref{th:notsimple}. Suppose that the first assertion of this proposition is false. Then every singular level curve omits at least one inner boundary component. Let $L$ be such a curve.  Suppose that $L$ omits $k\geq 1$ boundary components and contains $l\geq 2$ boundary components in its interior, where $k+l=m-1$.  Let $\{v_1,v_2,\ldots,v_n\}$ be the singular vertices that belong to $L$; let $\{w_1,w_2,\ldots,w_m\}$ be the singular vertices in the interior of $L$. It is easy to see that (for example by induction)
\begin{equation}
\label{eq:maxindex}
\sum_{x\in \{w_1,\ldots,v_n\}} \mbox{\rm Ind}_g(x) = 1-l. 
\end{equation}
By Theorem~\ref{Th:index} we must have additional singular vertices $\{q_1,\ldots ,q_s\}$ such that 
\begin{equation}
\label{eq:maxindexx}
\sum_{y\in \{q_1,\ldots,q_s\}} \mbox{\rm Ind}_g(y) = (1-k)-1. 
\end{equation}
Let $L_1$ be a singular level curve that has $q_1$ as a singular vertex. Suppose that $L_1$ does not enclose $L$ in its interior. Then 
\begin{equation}
\label{eq:L1}
\mbox{\rm Ind}_g(q_1)\leq (1-k),
\end{equation}
with equality if and only if $L_1$ encloses all $k$ boundary components. Therefore, we must have $s=2$, and a singular level curve that passes through $q_2$ will necessarily enclose all interior components of $\partial \Omega$. Hence, we arrive at a  contradiction.

Assume now that $L_1$ encloses $L$ and $t\geq 1$ out of the $k$ boundary components. Let $\{q_1,q_2,\ldots,q_s\}$ be the singular vertices on the various cycles on $L_1$ which do not belong to the cycle containing $L$ or its interior. 
It is easy to see that 
\begin{equation}
\label{eq:L12}
\sum_{z\in\{q_1,q_2,\ldots,q_s\}}  \mbox{\rm Ind}_g(z)= (1-t).
\end{equation}
By Theorem~\ref{Th:index} we must have additional singular vertices $\{p_1,\ldots, p_n\}$ such that 

\begin{equation}
\label{eq:maxindex1}
\sum_{t\in \{p_1,\ldots,q_n\}} \mbox{\rm Ind}_g(t) =-1-(k-t). 
\end{equation}
We now repeat the process above with $L_1$ replacing $L$, $l+t$ replacing $l$ and $k-t$ replacing $k$. After finitely many times (at most $k-1$), we must end up with a singular level curve with a single singular vertex of index $-2$  that encloses all inner boundary components of $\partial \Omega$.

\end{proof}

\medskip
We are now concerned with geometric properties of the set of lengths of level curves. This is significant for the applications. For example,  even in the case of an annulus (which is the subject of Theorem~\ref{Th:annulus}), it is not clear that  level curves in the same homotopy class have the same length. Since we wish to map a given annulus to a straight cylinder, we must verify this property and its suitable generalizations.

\begin{Thm}
\label{th:lengthsareequal}
There exists a finite set of non-negative numbers 
${\mathcal K}$ such that the following holds:
\begin{enumerate}
\item  ${\mathcal K}$ contains $0$, and $k$ is its maximal element, 
\item  ${\mathcal K}$ is monotone decreasing,
\item any level curve corresponding to a $g$-value which does not belong to ${\mathcal K}$ is simple, and
\item any component of a level curve, which corresponds to a $g$-value which is strictly 
between any two values $k_n>k_m$ in $ {\mathcal K}$, such that $k_n -k_m$ is minimal,  and is contained in a unique simple cycle $C_n$ determined by $k_n$, has its $l_1$-length equal to that of $C_n$. 
\item Moreover, the  length of $C_n$ is equal to the length of the component of $g^{-1}(k_m)$ which it encloses. 
\end{enumerate}
\end{Thm}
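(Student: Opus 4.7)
The plan is to take $\mathcal{K}$ to be the set of $g$-values at which the level curve is singular, together with the two boundary values. Concretely, set $\mathcal{K} = \{0, k\} \cup \{g(v) : v \text{ a singular vertex of } g\}$. Since there are only finitely many singular vertices in $\mathcal{T}^{(0)}$, $\mathcal{K}$ is finite; list it in decreasing order as $k = k_0 > k_1 > \cdots > k_N = 0$. This immediately yields (1) and (2).

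For (3), I would observe: if $c \notin \mathcal{K}$ then no vertex in $g^{-1}(c)$ is singular. By Theorem~\ref{th:notsimple}, each connected component of $g^{-1}(c)$ is a generalized piecewise bouquet of circles; but a bouquet containing at least two circles must have a tangency vertex, and any such vertex has $\mbox{\rm Sgc}_g \geq 4$, hence index at most $-1$, contradicting regularity. Consequently every component of $g^{-1}(c)$ is a single simple cycle.

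The crux of (4) and (5) is the following flux identity, which I would prove as the key lemma: if $L$ and $L'$ are two level curves of $g$ which together bound an annular region $A$ (in the induced cellular decomposition produced by the construction preceding Definition~\ref{def:modifiedsolution}), and the interior of $A$ contains no singular vertex of $g$, then the flux-gradient lengths of $L$ and $L'$ are equal. The argument is to apply the first Green identity (Theorem~\ref{pr:Green id}) on $A$ to the pair $(g, \mathbf{1})$. Since $g$ remains discrete-harmonic on the interior of $A$ (this is precisely the purpose of the modified conductance in (\ref{eq:prehar})) and the factor $v(x) - v(y)$ on the left-hand side vanishes when $v \equiv 1$, the identity collapses to
\[
0 = \sum_{x \in L}\frac{\partial g}{\partial n}(A)(x) + \sum_{x \in L'}\frac{\partial g}{\partial n}(A)(x).
\]
By the maximum principle on $A$, whichever of $L, L'$ sits at the larger $g$-value has all its $A$-neighbors at strictly smaller $g$-values, making the corresponding normal derivatives positive and their sum equal to the flux-gradient length of that curve; the sum over the other curve has the opposite sign. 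Cancelling signs yields $l(L) = l(L')$.

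Finally, I would deduce (4) and (5) by invoking the key lemma twice on each interval $(k_m, k_n)$ with $k_n, k_m$ consecutive in $\mathcal{K}$: once between the simple cycle $C_n$ at level $k_n$ and a component $L_c$ of $g^{-1}(c)$ with $c \in (k_m, k_n)$, and once between $C_n$ and the unique component of $g^{-1}(k_m)$ which it encloses. Existence and uniqueness of this enclosed $k_m$-component follows by continuously lowering the level value from just below $k_n$ to $k_m$ through regular values, where by (3) the level curves are simple cycles that deform within a fixed isotopy class; in both applications the interior of the resulting annular region is free of singular vertices so the key lemma applies. The step I expect to demand the most care is verifying the Green-identity setup at points where $L$ crosses edges of the original triangulation — one must ensure that the modified conductances of (\ref{eq:prehar}) are used throughout, and that the flux contribution from each new edge is indeed captured — but Definition~\ref{def:modifiedsolution} and the computation (\ref{eq:newvertex})–(\ref{eq:newvertex1}) in the proof of Theorem~\ref{le:length of pl} are tailored exactly to this purpose, so the check is essentially bookkeeping.
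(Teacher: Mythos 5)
Your overall route is the paper's route: the same choice ${\mathcal K}=\{0,k\}\cup\{\mbox{critical values of } g\}$, the simplicity of non-critical level curves deduced from the bouquet structure of Theorem~\ref{th:notsimple}, and, for (4) and (5), the first Green identity (Theorem~\ref{pr:Green id}) applied with the modified harmonic solution and $v\equiv 1$ on the annular region bounded by the two level curves, so that the total flux vanishes and the two boundary sums, which are the flux-gradient lengths up to sign, must agree.

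There is, however, one step at which your key lemma as sketched would fail and on which the paper spends the second half of its proof of (4): the case in which $L$ and $C_n$ are at combinatorial distance less than two in the induced decomposition. This is not a marginal case; when the value $c$ is close to $k_n$, the new vertices that $L$ creates on edges leaving vertices of $C_n$ are adjacent to $C_n$ itself. Then the interior vertex set $F$ of your annulus may be empty, or a vertex of $L$ may have no neighbor in $F$ because an edge of the induced decomposition runs directly from $L$ to $C_n$; in either situation the Green identity over $F$ is vacuous or omits precisely the flux carried by those direct edges, so the cancellation you invoke equates only partial sums rather than the two $l_1$-lengths. The bookkeeping you point to, namely Equation~(\ref{eq:prehar}) and the computations (\ref{eq:newvertex})--(\ref{eq:newvertex1}), takes care of crossing points of $L$ with edges, but not of edges joining the two curves directly. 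The paper repairs this by subdividing every such edge, introducing the further-modified conductances of Equation~(\ref{eq:addingvertices}), passing to the corresponding solution $g_2$, and verifying in (\ref{eq:modifiedagain1}) that the normal derivatives along $L$ and $C_n$ are unchanged; with that insertion your lemma, and hence assertions (4) and (5), go through as you intend.
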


\begin{proof}
We let ${\mathcal K}$ denote the set of critical values of $g$ union $k$ and $0$; this gives assertion $(1)$. Since 
$\chi(\Omega\cup\partial\Omega)$ is negative and the index of each singular vertex is less than or equal to $-1$, Theorem~\ref{Th:index} implies that the number of singular vertices, and hence the number of corresponding critical $g$-values, is finite. Assertion $(2)$ follows by ordering. Assertion $(3)$ is the content 
of Lemma~\ref{le:simpleisclosed}. We now turn to proving $(4)$. Let $L$ be a connected component of a level curve as described in $(4)$. By the structure theory provided in Theorem~\ref{th:notsimple} and the maximum principle, $L$ is contained in a unique simple cycle, part of the bouquet which composes  $g^{-1}(k_m)$. Let $C_n$ denote this cycle. 
We first assume that the combinatorial distance between $L$ and $C_n$ is greater than or equal to $2$. We apply the construction that precedes Definition~\ref{def:modifiedsolution} to the level curves $C_n$ and $L$ and the annulus ${\mathcal A}_{L,C_n}$ enclosed by them. By abuse of notation, we will keep denoting the modified set of vertices by ${\mathcal T}^{(0)}$.

Let $F=F_{C_n,L}$ denote the subset of ${\mathcal T}^{(0)}\cap {\mathcal A}_{L,C_n}$. Since the combinatorial distance between $C_n$ and $L$ is greater than or equal to $2$, it follows that 

\begin{equation}
\label{eq:newset}
F\neq\emptyset\  \mbox{\rm  and that}\  
\delta(F)=\{v\in {\mathcal T}^{(0)} : v\in C_n\cup L\}.
\end{equation}

Recall that $g_1$, the modified solution of the D-BVP, is defined on $F\cup\delta(F)$ by requiring that $g_1$ is harmonic (with respect to the modified conductance function) in $F$, $g_1|_{L}=g|_{L}$ and $g_1|_{C_n}= g|_{C_n}$. By the uniqueness of the D-BVP solution, it is clear that $g_1= g|_{F}$.  Let $F^{\circ}$ denote the set of
vertices in $F$ which do not belong to $\delta(F)$. We apply Proposition~\ref{pr:Green id} (the first Green Identity) to $F$ with  $u=g_1$ and $v\equiv 1$.  Equation~(\ref{eq:Green}) then gives
\begin{equation}
\label{eq:Omega1}
0=\sum_{x\in \delta(F)}\frac{\bord g_1}{\bord n}(F^{\circ})(x).
\end{equation}
Hence 
\begin{equation}
\label{eq:Omega1boundary}
 0=\sum_{x\in C_{n}}\frac{\bord g_1}{\bord n}(F^{\circ})(x)+ \sum_{x\in L}\frac{\bord g_1}{\bord n}(F^{\circ})(x),
\end{equation}
which implies that 
\begin{equation}
\label{eq:Omega1boundary1}
 \sum_{x\in L}\frac{\bord g_1}{\bord n}(F^{\circ})(x)= 
 - \sum_{x\in C_{n}}\frac{\bord g_1}{\bord n}(F^{\circ})(x).
\end{equation}
It follows that, with respect to the flux-gradient metric (see 
Equation~(\ref{eq:length gradient metric})), $L$ and $C_n$ have the same length. We now deal with the case that the combinatorial distance between $C_n$ and $L$ is less than 2. While we wish to use again the first Green Identity, care is needed since $F$ is empty in this case. We add a vertex on each edge in the modified cellular decomposition that has a vertex $v$ on $L$ and a vertex $u$ on $C_n$. The value of $g_1(t)$ is defined by the value of $g_1$ on this edge and we also let
\begin{equation}
\label{eq:addingvertices}
\tilde{\tilde{c}}(v,t)= \frac {\tilde{c}(v,t) (g_1(v)-g_1(u))}{g_1(v)-g_1(t)} \ \   \mbox{\rm and}\ \  \tilde{\tilde{c}}(u,t) = \frac {\tilde{c}(u,t) (g_1(u)-g_1(v))}{g_1(u)-g_1(t)}, 
\end{equation}
where $\tilde{c}(v,t)$ and $\tilde{c}(u,t)$ are defined as in equation~(\ref{eq:prehar}) with $t$ replacing $x$. We keep all conductance functions on other edges unchanged.

By applying again the existence and uniqueness theorems in \cite{BeCaEn}, we obtain a solution $g_2$ of a new D-BVP defined on ${\mathcal A}_{L,C_n}$ by requiring that $g_2|_{L}=g_1|_L$, $g_2|_{C_{n}}=g_{1}|_{C_n}$ and that  $g_2$ is harmonic in ${\mathcal A}_{L,C_n}$. It follows that
$g_2$ is $g_1$ on all vertices in ${\mathcal T}^{(0)}$ and is modified so as to have the values of $g_1$ on all vertices defined above. It is easy to verify (using Equation~\ref{eq:prehar} and Equation~\ref{eq:addingvertices})  that 
\begin{equation}
\label{eq:modifiedagain1}
\frac{\partial g_2}{\partial n}(A_{L,C_n})(v)=\frac{\partial g_1}{\partial n}(A_{L,C_n})(v)\  \mbox{\rm and}\ 
 \frac{\partial g_2}{\partial n}(A_{L,C_n})(u)=\frac{\partial g_1}{\partial n}(A_{L,C_n})(u).
\end{equation}
Assertion $(5)$ is proved by following the same techniques that were used in proving  assertion $(4)$ and Theorem~\ref{th:notsimple}. 

\end{proof}

\begin{Rem}
\label{re:lengthof}
\noindent  Observe that once assertion $(5)$ is proved, in order to prove $(4)$, one needs only add at most one vertex on each edge whose vertices lie on $C_n$ and $g^{-1}(k_m)$ respectively.  Also, the same techniques employed in the proof of assertion $(4)$ allow one to get equality between the sum of  the $l_1$-lengths of all the components for a non-singular level curve enclosed in a simple cycle and the $l_1$-length of this cycle. Finally, the case $g^{-1}(0)$ was not stated separately, yet it similarly gives that the sum of the $l_1$-lengths of the inner boundary components of $\partial\Omega$ equals the $l_1$-length of the outer boundary. We will revisit this last point in the proof of Theorem~\ref{Th:pair}.

\end{Rem}

\section{the case of an annulus}
\label{se:Annulus}
In this section we prove Theorem~\ref{Th:annulus}.  The proof consists of two parts. First, we will show that there is a well-defined mapping from ${\mathcal T}^{(1)}$ into a set of 
 (Euclidean) rectangles embedded in the cylinder $S_{\mathcal A}$. The crux of this part is the fact  that level curves for $g$ have the same induced length (measured with the  $l_1$ metric, see Remark~\ref{re:lengthof}) and  a simple application of the maximum principle. Second, we will show that the collection of these rectangles forms a tiling of $S_{\mathcal A}$ with no gaps and no overlaps. This is a consequence of the first part and an energy-area computation.  The latter follows by our construction, the dimensions of $S_{\mathcal A}$, and the first Green identity (see Theorem~\ref{pr:Green id}).

\begin{figure}[h]
 \begin{minipage}[t]{.45\textwidth}
    \begin{center}  
      \includegraphics[width=2in]{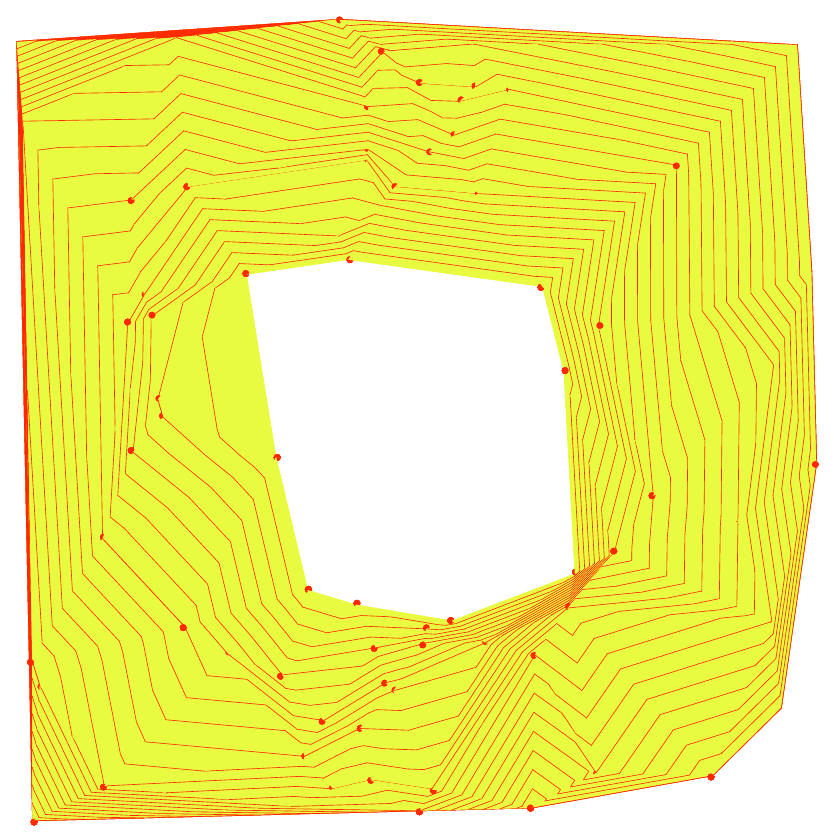}
            \caption{Level curves in an annulus}
      \label{fig-annulus}
    \end{center}
  \end{minipage}
\end{figure}

Given a straight Euclidean cylinder of height $h$, we will endow it with coordinates arising naturally from $\SS^{1}\times[0,h]$. The boundary component corresponding to $h$ will be called the top and the other one will be called the bottom. 
Before providing  the proof, we need a definition which will simplify keeping track of the mapping $f$. 
\begin{Def}
\label{de:marker}
A marker on a straight Euclidean cylinder  is a vertical closed interval  which is the isometric image of  $\theta\times [a,b]$, for some $\theta \in [0,2 \pi)$ and $[a,b]\subset [0,h]$ with $a<b$. The marker's  uppermost end-point corresponds to $(\theta,b)$ and its lowest end-point to $(\theta,a)$.
\end{Def}

\noindent{\bf Proof of Theorem~\ref{Th:annulus}.}
Let $S_{\mathcal A}$ be a straight Euclidean cylinder with height $H=k$  and 
circumference $$C=\sum_{x\in E_1}\frac{\bord g}{\bord n}(x). $$
Let ${\mathcal L}=\{L_1,\ldots,L_k\}$ be the level sets for $g$ corresponding to the vertices in ${\mathcal T}^{(0)}$ arranged in a descending $g$-values order.  We place a vertex at each intersection of an edge with an $L_i,\   i=1,\ldots, k,$, and if necessary more vertices on edges so that any two successive level curves in ${\mathcal L}$ are at combinatorial distance (at least) two. We call the first group of added vertices type I and the second type II (recall that conductance along edges are changed as well, according to the discussion preceding Definition~\ref{def:modifiedsolution}). In particular, the assertions of Theorem~\ref{th:lengthsareequal} and 
Remark~\ref{re:lengthof} hold. Since $\chi({\mathcal A})=0$ and the index of a singular vertex is always negative, the index formula (Theorem ~\ref{Th:index}) prevents the existence of singular vertices in ${\mathcal A}$.  Therefore, ${\mathcal K}=\{0,k\}$,  and furthermore the length of any $g$-level curve is equal to $C$.

Let the vertices $\{x_1,\ldots,x_p\}$ in $E_1=L_1$ be ordered  counterclockwise (and on any level curve), 
starting with $x_1$. Let $\{y_1,y_2,\ldots, y_t\}$ be its type I neighbors in the induced cellular decomposition oriented clockwise (which will always be the ordering for neighbors). 
We identify $x_1$ with $0\times k$ in the coordinates above. We associate markers   $\{m_{x_1,y_1},\ldots,m_{x_1,y_t}\}$ with $x_1$ in the following way. The length of marker $m_{x_1,y_s}$, for $s=1,\ldots,t$, is equal to (the constant) $g(x_1)-g(y_s)$ and its uppermost end-point is positioned at the top of $S_{\mathcal A}$ at 

\begin{equation}
\label{eq;positionmarker}
x_1 +
\frac{\sum_{k=1}^{s-1} c(x_1,y_k)(g(x_1)-g(y_k))}{\frac{\partial g}{\partial n}({\mathcal A})(x_1)}\times 2\pi,
\end{equation} 
measured counterclockwise.  For each edge $e_{u,v}=[u,v]$ with $g(u)>g(v)$, let $Q_{u,v}$ be a Euclidean rectangle with height equals to $g(u)-g(v)$ and width equals to $c(u,v)(g(u)-g(v))$. We will identify a planar straight Euclidean rectangle and  its image, under an isometry, in a straight Euclidean cylinder. 
For $s=1,\ldots,t$,
 we position $Q_{x_1,y_s}$ in $S_{\mathcal A}$ in such a way that its leftmost edge coincides with $m_{x_1,y_s}$. By construction and the position of the markers,
 
\begin{equation}
\label{eq;intersectionis1}
Q_{x_1,y_s}\cap Q_{x_1,y_{s+1}}= m_{x_1,y_{s+1}}.
\end{equation}
Assume that we have placed markers and  rectangles associated to all the vertices up to $x_k$  where $k<p$; let $z_1$ be the leftmost neighbor of $x_{k+1}$ and let $Q_{x_{k},v}$ be the rightmost rectangle associated with $x_k$. We now position the marker $m_{x_{k+1},z_1}$, associated with $x_{k+1}$ and $z_1$ 
so that it is lined with the rightmost edge of $Q_{x_{k},v}$ and his upper end-point is at the top of $S_{\mathcal A}$.  We continue  placing markers and rectangles corresponding to the rest of the neighbors of $x_{k+1}$. We terminate these steps when $k=p$. Note that the top of $S_{\mathcal A}$ is completely covered by the top of the rectangles constructed above where intersections between any of these (top edges)  is either a vertex or empty. 

For all $1<n<k$, assume that all the markers corresponding to vertices in $L_{n-1}$ and their associated rectangles have been placed as above in such a way that the following condition, which we call {\it consistent},  
  holds. For $[w,v]\in T^{(1)}$ with $g(w)>g(v)$ and $s\in [w,v]$ a vertex of type I, the uppermost end-point of the marker $m_{s,v}$ coincides with the lowest end-point of the marker $m_{w,s}$; moreover, the two rectangles $Q_{w,s}, Q_{s,v}$ tile $Q_{v,w}$.  
  Informally, this condition allows us to ``continuously extend" rectangles associated with edges that cross level curves along these, and therefore will show that that edges in ${\mathcal T}^{(1)}$ are mapped (perhaps in several steps) onto a unique rectangle.

We will now show how to place the markers and rectangles corresponding to the vertices of the level set $L_{n}$ in a consistent way. The uppermost end-point  of each marker associated with a vertex $v\in L_n$ in this level set, and any of its neighbors in $L_{n+1}$ is placed in $S_{\mathcal A}$ at height $g(v)$. Observe that $v$ is a vertex in some  $[q_i,v]$, where $q_i$ belongs to $L_n$. Choose among all such edges the rightmost (viewed from $L_n$). Let $[q_0,v]$ be this edge and let $m_{q_{0},v}$ be its marker. Place the marker of $v$ which corresponds to an edge $[v,w]$ with $w$ being the leftmost vertex among the neighbors of $v$ in $L_{n+1}$, so that its uppermost end-point coincides with the lowest end-point  of $m_{q_0,v}$. Let $s_0$ be a vertex of type I on $L_n$. By definition, $s_0$ is connected to a unique vertex $v_0\in L_{n+1}$ and to a unique vertex $w_{0}\in L_{n-1}$. Let $s_q$ be the first vertex to the left of $s_0$ of type I in $L_n$. By definition, $s_q$ is connected to a unique vertex $w_p$ in $L_{n-1}$ and to a unique vertex $v_l \in L_{n+1}$. Let $\{s_1,\ldots,s_{q-1}\}$ be the vertices on $L_n$ between $s_0$ and $s_q$ and let $\{w_1,\ldots, w_{p-1}\}$ be the vertices on $L_{n-1}$ between $w_0$ and $w_p$. 
Let ${\mathcal Q_1}={\mathcal Q}_{w_0,s_0,s_q,w_p}$ be   
the piecewise linear rectangle enclosed by $[w_0,s_0]\cup [w_p,s_q]\cup L_{n-1}\cup L_{n}$, and which contains $\{s_0,\ldots,s_q\}$, and let ${\mathcal Q_2}={\mathcal Q}_{s_0,v_0,s_q,v_l}$ be  
the piecewise linear rectangle enclosed by $[s_0,v_0]\cup [s_q,v_l]\cup L_{n}\cup L_{n}$, and which contains $\{s_0,\ldots,s_q\}$.

 In order to prove that the consistent condition holds for all markers and rectangles created in this step, it suffices to prove it at $s_q$, assuming (with out loss of generality)  that the first marker we placed in a consistent way is $m_{s_0,v_0}$. 
By construction (see in particular Equation~(\ref{eq;positionmarker})) we need to prove that
\begin{eqnarray}
\label{eq:consistent}
\ \ \sum_{i=1}^{p-1}\frac{\partial g}{\partial n}({\mathcal Q}_1)(w_i)+\frac{\partial g}{\partial n}({\mathcal Q}_1)_{\mbox {\rm {\small right}}}(w_0)
& =&\\  \sum_{i=1}^{q-1}\frac{\partial g}{\partial n}({\mathcal Q}_2)(s_i)+\frac{\partial g}{\partial n}({\mathcal Q}_2)_{\mbox {\rm right}}(s_0),\nonumber
\end{eqnarray}
where the subscript ``right"  indicates that neighbors in the expressions  are taken from  ${\mathcal Q_1}$ (first line) or ${\mathcal Q_2}$ (second line) only. It is easy to check that since the modification of $g$ is harmonic at each $s_i, i=0,\ldots q$, and since $s_0$ and $s_q$ are type I vertices, Equation~(\ref{eq:consistent}) holds.

To conclude the construction, continue as above, exhausting all all vertices in $L_{n}$.
By the maximum principle, our construction, and the fact that all level curves have their  flux-gradient length equal to C, it is clear that the union of the rectangles is contained in $S_{\mathcal A}$.

\begin{figure}[htbp]
\begin{center}
 
\includegraphics[width=4in]{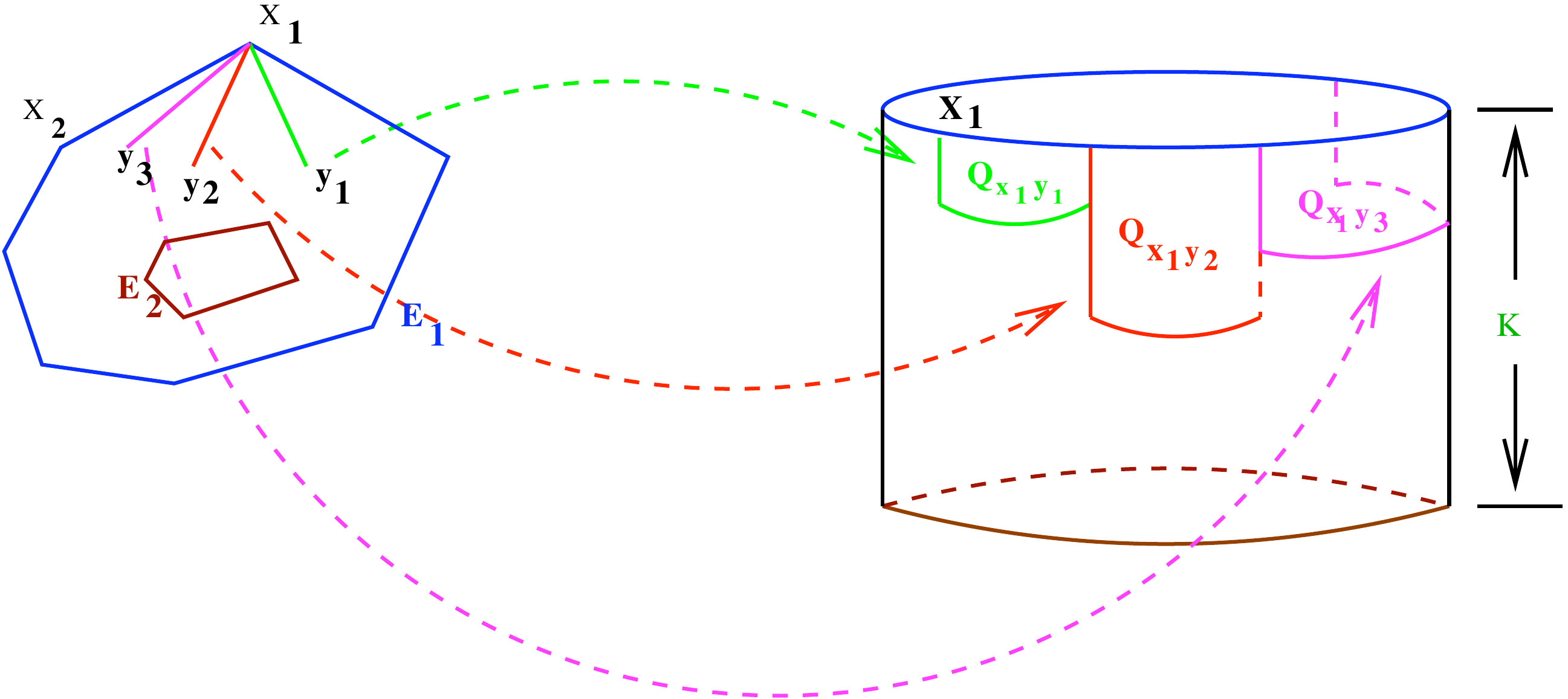}
 
\caption{Several markers and rectangles on the Euclidean cylinder after the completion of the construction.}
\label{figure:cylinder}
\end{center}
\end{figure}

We now prove that the collection of rectangles constructed above tiles $S_{\mathcal A}$ leaving no gaps. Without loss of generality, suppose that the collection of rectangles does not cover a strip of the form
$[\theta_1,\theta_2]\times [h_0,h_1]$ in $S_{\mathcal A}$, where $0\leq\theta_1<\theta_2\leq 2\pi$ and $0\leq h_0<h_1\leq k$.  By harmonicity, there exists at least one path whose vertices belong to ${\mathcal T}^{(0)}$ such that the values of $g$ along this path are strictly decreasing from $k$ to $0$.

In particular, the value $h_1$ is attained on some edge or vertex of this path. By construction, a gap in a $g$-level curve (i.e. an arc of level curve which is not covered by the top edges of rectangles) never occurs when $h_1$ is the $g$-value associated to a vertex in the modified cellular decomposition. Hence, we may assume that $h_1$ is attained in the interior of an edge. Let $L_{h_1}$ be the corresponding level curve. Recall that $L_{h_1}$ is simple and closed (as are all other level curves of $g$; see Lemma~\ref{le:simpleisclosed}).
We now follow the construction preceding Definition~\ref{def:modifiedsolution} and let 
$\{u_1,u_2,\ldots, u_q\}$ be all the new vertices on $L_{h_1}$, that is we place a vertex at each intersection of an edge in $\tilde{\mathcal T}^{(1)}$ with $L_{h_1}$.  As mentioned in the first paragraph of the proof, the flux-gradient length of $L_{h_1}$ is equal to C. Moreover, this length is
equal to
\begin{equation}
\label{eq:lengthofgap}
\sum_{i=1}^{q} \frac{\partial g}{\partial n}({\mathcal  O_1})(u_i),
\end{equation}
where ${\mathcal O}_1$ is the interior of the annulus enclosed by $L_{h_1}$ and $E_2$ (see the discussion proceeding Definition~\ref{def:modifiedsolution}). 
In particular, we may now place markers and rectangles associated to the collection of vertices $\{u_1,u_2,\ldots, u_q\}$ so that $L_{h_1}$ is covered by the top edges of these rectangles. Recall that up to this moment, markers and rectangles were associated only to vertices of type I in addition to those in ${\mathcal T}^{(0)}$. It is very easy to check that this can be done in a consistent way (using Equation~(\ref{eq:prehar})). Since $g$ is extended affinely over edges, every value between $h_0$ and $h_1$ is attained by $g$. Repeating this argument shows that all level curves are covered by rectangles. Hence the collection of rectangles leaves no gaps in $S_{\mathcal A}$.

Using an area argument, we now finish the proof by showing that there is no overlap between any two of the rectangles. Let ${\mathcal R}$ be the union of all the rectangles. By definition,
\begin{equation}
\label{eq;arecomputation}
\mbox{\rm Area}{(\mathcal R)}=\sum_{[x,y]\in \tilde{\mathcal T}^{(1)}}  c(x,y)(g(x)-g(y))(g(x)-g(y)).
\end{equation}
Note that the sum appearing in the right-hand side of Equation~(\ref{eq;arecomputation}) is computed over the induced cellular decomposition. A simple computation (using again Equation~(\ref{eq:prehar})) and the fact that the construction is consistent shows that this sum is equal to the one taken over $[x,y]\in {\mathcal T}^{(1)}$. Hence, the right-hand side of this equation is the energy $E(g)$ (see Definition~\ref{def:energy}). Therefore, by the first Green identity, applied with $u=v=g$ (see Theorem~\ref{pr:Green id}), and the dimensions of $S_{\mathcal A}$,  we have 
\begin{equation}
\label{eq:areasareequal}
E(g)=\mbox{\rm Area}{(\mathcal R)}=\mbox{\rm Area}(S_{\mathcal A}).
\end{equation}
Hence, since the rectangles do not overlap, they must tile $S_{\mathcal A}$. It is also evident that the mapping $f$ constructed above is energy preserving. 

\hfill$\Box$


\section{the case of an $m$-connected bounded planar region, $m>2$}
\label{se:pop}
There is a technical issue which we need to address in the proofs of Theorem~\ref{Th:ladder} and Theorem~\ref{Th:pair}. Since Proposition~\ref{pr:Green id} is applied all along, we must make sure for example, that the combinatorial distance between any two distinct level curves is at least two.  Therefore, one adds as many vertices of type I and type II as needed,  and modifiy conductances accordingly (as done at the beginning of the proof of Theorem~\ref{Th:annulus}) until the assertions of Theorem~\ref{th:lengthsareequal} and Remark~\ref{re:lengthof} hold.

\subsection{Pair of pants}
\label{subse:pants}
In this subsection we provide a proof of Theorem~\ref{Th:pair}. One interesting ingredient  of the proof is the construction of a good mapping  (in the sense of Theorem~\ref{Th:annulus}) from a planar annulus with one singular boundary component into a Euclidean cylinder having one boundary component, a singular curve and the other one which is simple. The construction of the mapping $f$ is then achieved by gluing two Euclidean cylinders to the one above along the singular component. 
\medskip

\noindent{\bf Proof of Theorem~\ref{Th:pair}.}

We extend the solution $g$ to an affine map ${\bar g}: |{\mathcal P}\cup{\bord P}|\rightarrow {\mathbf R}^{+}\cup{0}$ and use ${\bar g}$ as a height function for $|{\mathcal P}\cup{\bord P}|$. This gives a two-dimensional polyhedron, denoted by ${\bar P}_g$, which is  homotopically equivalent to ${\mathcal P}$. We have 
\begin{equation}
\chi({\bar P}_g)=\chi({\mathcal P})=-1.
\end{equation} 

Double ${\bar P}_g$ along its boundary to obtain a genus $2$ closed surface which  will be denoted by 
${\mbox{\rm D}}{\mathcal P}$. The Index Theorem (see Theorem~\ref{Th:index}) asserts that
\begin{equation}
 \sum_{v\in {\mbox{\rm D}}{\mathcal P}} {\mbox {\rm Ind}}_{g}(v)= -2.
\end{equation}

It follows from Equation~(\ref{eq:index}) and  a simple Euler characteristic computation that we must have a single vertex $u\in {\bar P}_g$ whose index equals $-1$, which means that ${\mbox{\rm Sgc}}(u)=4$.  Let $L(u)$ be the figure eight curve corresponding to the value ${\bar g}(u)$.  Then $L(u)=L_{I}\cup L_{II}$, where $L_{I}$ and $L_{II}$ are two simple cycles intersecting at $u$ (using here the assertion of Theorem~\ref{th:notsimple}).  

Moreover, since  $E_2^1$ and $E_2^2$ are the only inner components of $\partial \Omega$, by applying Lemma~\ref{le:simpleisclosed} we may assume that $E_2^1$ is contained in $L_{I}$ and $E_2^2$ is contained in $L_{II}$. By cutting along $L(u)$, we can decompose $\Omega\cup\partial \Omega$ into three components. The first, denoted by $\Omega_1$, has boundary components $E_1$ and $L(u)$. The second, denoted by $\Omega_{2}^1$, has boundary components $E_2^1$ and 
$L_{I}$, and the third, denoted by $\Omega_{2}^2$, has boundary components $E_2^2$ and $L_{II}$. Note that the interiors of these components are all homeomorphic to $S^1\times(0,1)$, and moreover that   $\Omega_{2}^1$ and $\Omega_{2}^2$ are each homeomorphic to an annulus, however $\chi(\Omega_1)=-1$.  Therefore, $\Omega_1$ may be viewed as an annulus with one singular boundary component, or equivalently as a really short pair of pants, i.e. one with no cuffs.  We now define three D-BVPs on the above domains following the discussion preceding Definition~\ref{def:modifiedsolution}. Note that an essential property of $L(u)$, which is given by Theorem~\ref{th:notsimple}, allows us to define a D-BVP on $\Omega_1$.

Let  $g_1$ be the solution on $\Omega_1$, $g_2^1$ on
$\Omega_2^{1}$ and $g_2^2$ on $\Omega_2^2$.  
By Remark~\ref{re:lengthof}, we have that the lengths of $L_{I}$ and $E_2^1$, with respect to the flux-gradient metric induced by $g_2^1$, are both equal to
\begin{equation}
\label{eq:length of LI}
\big |\sum_{x\in E_2^1}\frac{\bord g}{\bord n}(\Omega_2^1)(x)\big |.
\end{equation}

Similarly, we have for $\Omega_2^2$ that the lengths, with respect to the flux-gradient metric induced by $g_2^2$ of $L_{II}$ and $E_2^2$, are equal to
\begin{equation}
\label{eq:length of LII}
\big |\sum_{x\in E_2^2}\frac{\bord g}{\bord n}(\Omega_2^2)(x)\big |.
\end{equation}

 Finally, by applying Theorem~\ref{th:lengthsareequal} with $C_n=E_1$ and $L=L(u)$, we have that the lengths of $E_1$ and $L(u)$, with respect to the $l_1$ gradient metric induced by $g_1$, are equal to 
 \begin{equation}
\label{eq:length of L}
\sum_{x\in E_1}\frac{\bord g}{\bord n}(\Omega_1)(x).
\end{equation}
 
 \begin{figure}[h]
    \begin{center}  
      \includegraphics[width=5in]{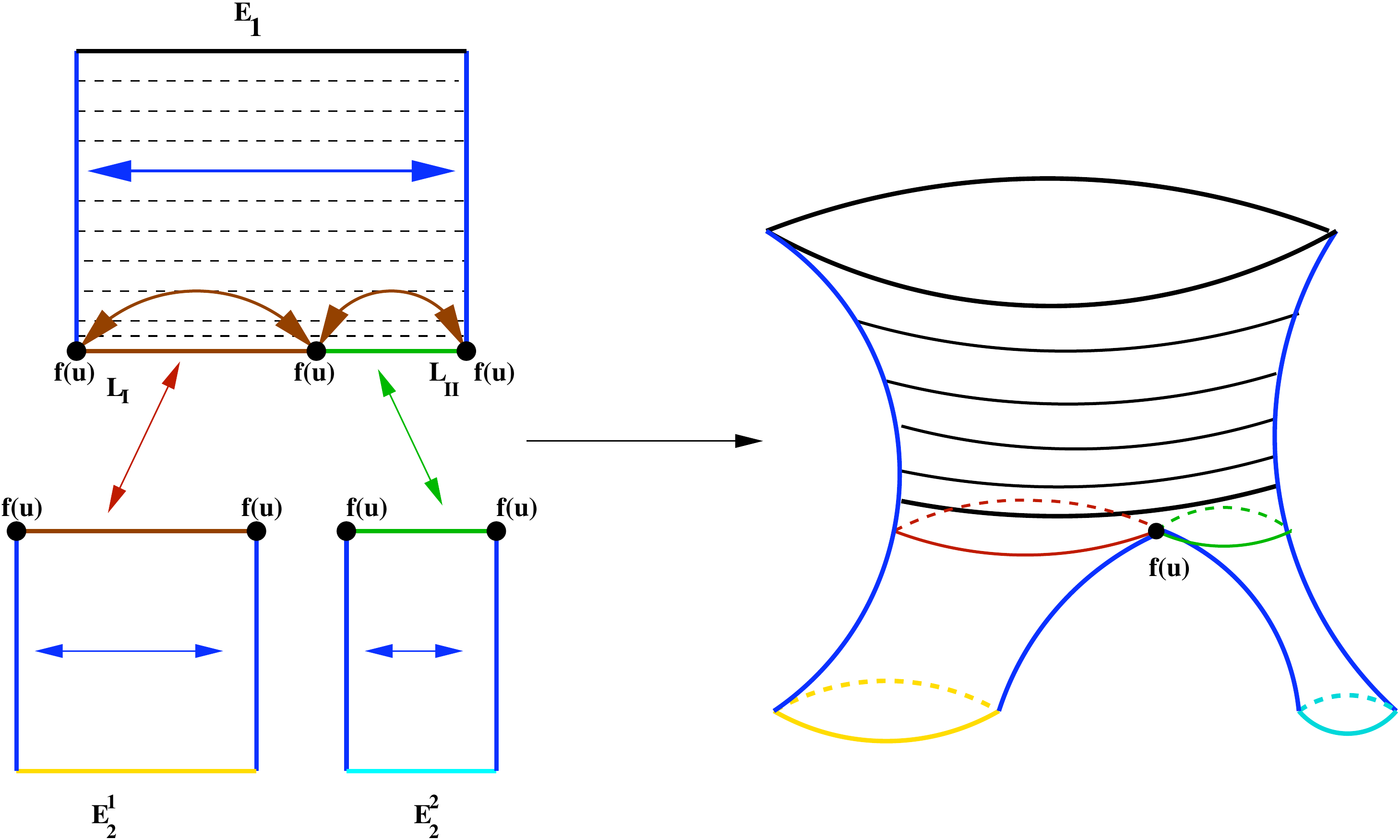}
            \caption{The construction of the Euclidean pair of pants with one singular point.}
      \label{fig-Pair}
    \end{center}
\end{figure}

We now turn to the construction of $(S_{\mathcal P},f)$. First, apply 
Theorem~\ref{Th:annulus} to $\Omega_2^1$ and to $\Omega_2^2$. The outputs are two straight Euclidean cylinders $S_{\Omega_2^1}$ and $S_{\Omega_2^2}$, two mappings $f_{\Omega_2^1}: {\Omega_2^1}\rightarrow{\mathcal S}_1 $ and $f_{\Omega_2^2}:{\Omega_2^2}\rightarrow{\mathcal S}_2$ (where the maps, their domains and the cylinders are described in detail in Theorem~\ref{Th:annulus}). Second, we wish to apply Theorem~\ref{Th:annulus} to $\Omega_1$. However, one boundary component of it ($L(u)$) is singular at $u$.  Note that the construction described in the proof of Theorem~\ref{Th:annulus} will go through for any annulus contained in $\Omega_1$ with one  boundary component being $E_1$ and the other being $L_{g(L(u))+\epsilon}$ (with $\epsilon>0$ arbitrarily small). Thus, we obtain a sequence of Euclidean cylinders that have the same circumference (the flux-gradient metric length of $E_1$)  and their heights converge to 
$k-g(L(u))$. 

As $\epsilon\rightarrow 0$, the level curves corresponding to $g(L(u))+\epsilon$ converge geometrically to $L(u)$. Hence, 
the limiting cylinder ${\mathcal S}_{0}$ is a Euclidean cylinder with one singular boundary component (bottom). 
We call one boundary component of this cylinder {\it top}  and the other {\it bottom}.
Concretely, ${\mathcal S}_{0}$  is obtained by taking a Euclidean cylinder of height $k-g(L(u))$ and circumference which is equal to the flux-gradient metric length of $E_1$, picking two points on one boundary component of it at a distance which equals the flux-gradient metric length of $L_{I}$, and identifying them. We will abuse notation and denote this point by $u$.  Since identification occurs only at the boundary, the tiling persists in the limit. We denote by $f_{\Omega_1}$ the mapping given by Theorem~\ref{Th:annulus}, modified on the bottom as above. Let 
\begin{equation}
\label{eq:mapping}
f=f_{\Omega_1}\cup f_{\Omega_2^1}\cup f_{\Omega_2^2},
\end{equation}
 where the domain of $f$ is $\Omega_1\cup\Omega_2^1\cup\Omega_2^2$  and its image is obtained by gluing isometrically $S_1$ and $S_2$ to the bottom in such a way that the intersection consists of only one point (see Figure~\ref{fig-Pair}), denoted by $f(u)$. By Remark~\ref{re:lengthof}, we have that the length (measured with the $l_1$-induced metric) of the bottom of ${\mathcal S}_{0}$ is equal to the sum of the lengths of the tops of the two cylinders ${\mathcal S}_1$ and ${\mathcal S}_2$ (one can also obtain this by applying Proposition~\ref{pr:Green id} and Equations~(\ref{eq:length of LI})-(\ref{eq:length of L}) directly). Furthermore, we 
 require the gluing to be consistent; i.e  the gluing described above respects rectangles corresponding to edges which cross  $L(u)$ (see the Proof of Theorem~\ref{Th:annulus}). 
 The fact that this can be done may be justified by basically the same argument appearing before and after Equation~(\ref{eq:consistent}). Note that one consequence of this is that $f(u)$ is uniquely determined (the modified $g$ being harmonic being the key issue).

It is easy to check that the only cone angle is at this point and is equal to $4\pi$. The mappings $f_{\Omega_2^1},f_{\Omega_2^2}$ are energy preserving in the sense explained in Theorem~\ref{Th:annulus}; the mapping $f_{\Omega_1}$ is also energy preserving (since the identification is done on bottom only). Since the gluing is by isometries, and the cylinders involved meet only at one vertex, the mapping $f$ is energy preserving. 

\hfill$\Box$

\subsection{The case of an $m$-connected domain, $m>3$}
\label{subse:mconnected}
In this subsection we provide a proof of Theorem~\ref{Th:ladder}.
The proof is a modification of Theorem~\ref{Th:pair} with some bookkeeping and 
successive changes of the initial D-BVP.

\noindent{\bf Proof of Theorem 0.4.}
Let $L_1$ be the unique level curve whose existence is guaranteed by Proposition~\ref{pr:onethatenclose}. Let $\Omega_1$ be the domain bounded by $E_1$ and $L_1$. Observe that $\Omega_1$ is homeomorphic to $S^1\times(0,1)$. Suppose that $L_1= l_1\cup l_2\ldots\cup l_k$, where each $l_i$ is a simple cycle (for $i=1,\ldots,k$), where any two of these cycles are either disjoint or intersect at a singular vertex in ${\mathcal T}^{(0)}$ (see Theorem~\ref{th:notsimple}). Let ${\mathcal U}=\{u_1,\ldots,u_m\}$ be the set of all singular vertices along $L_1$ with $\{\mbox{\rm Ind}_g(u_1), \ldots ,  \mbox{\rm Ind}_g(u_m)\}$ being their indices, respectively. 

The interior of each $l_i$, which will be denoted by $l_i^{\circ}$, is a $q_i$-connected domain with $q_i<m-1$, for $i=1,\ldots, k$, (unless $k=1$). We now modify the initial D-BVP on $\Omega_1$, $l_i$ and $l_i^{\circ}$ as described in the discussion preceding Definition~\ref{def:modifiedsolution}, and we obtain $k+1$ new harmonic functions: $g_0$ which is defined on $E_1,\Omega_1$ and $L_1$, and  $g_1,\ldots, g_k$ defined on $l_1,\ldots,l_k$ and their interiors, respectively. 

Recall that modifications are made by adding vertices to  ${\mathcal T}^{(0)}$ along the $l_i$'s and by defining conductance along new edges: those which cross the $l_i$'s (See Equation~(\ref{eq:prehar})). 
By part $(5)$ of Theorem~\ref{th:lengthsareequal} and Remark~\ref{re:lengthof}, we have that the length of $L_1$, with respect to the flux-gradient metric induced by $g_0$, is equal to the length of $E_1$ with respect to the flux-gradient metric induced by $g_0$.  Hence both are equal to the length of $E_1$ with respect to the initial $g$ metric. We record this as
\begin{equation}
\label{eq:lengthis equal}
\mbox{\rm length}_{ g,l_1} (E_1)=\mbox{\rm length}_{ g,l_1} (L_1).
\end{equation}
By using $g_i$ in the interior of each $l_i$,  we now choose a singular curve enclosing all of its $q_i-1$ boundary curves (see Proposition~\ref{pr:onethatenclose}). The result is a set of singular level curves  ${\mathcal W}=\{W_1,\ldots, W_n\}$ and the set of singular vertices ${\mathcal V}=\{v_1,\dots,v_p\}$ they contain with their associated indices ${\mathcal I}=\{\mbox{\rm Ind}_g(v_1), \ldots ,  \mbox{\rm Ind}_g(u_p)\}$. Each simple cycle of a singular level curve $W_j$ contains $s_j<q_i$ boundary curves unless $W_j^{\circ}$ is an annulus. As above, we have only added vertices on the $l_i$'s, assigning each one of the vertices on a specific $l_i$, the  $g(l_i)$-value. The conductance constants are changed only along new edges, those which cross $l_i$, according to  Equation~(\ref{eq:prehar}). Hence, the set ${\mathcal W}$ coincides with the set of singular level curves  of $g$ minus $L_1$  (similar statements hold for ${\mathcal V}$ and ${\mathcal I}$).

We repeat this process (at most) finitely many times, modifying (if needed) the cellular decomposition and defining conductance constants according to Equation~(\ref{eq:prehar}), until the interior of each simple cycle of each singular level curve is an annulus. At each step, we obtain new harmonic functions defined on domains with fewer boundary components.  An equation analogous to Equation~(\ref{eq:lengthis equal}) holds for each simple cycle which is a component of a singular level curve and the nearest singular level curve it contains. That is, its length measured by the flux-gradient metric induced by the harmonic functions defined on its interior is equal to the length of the singular level curve, both measured  by the harmonic map defined in the interior of the simple cycle (see part $(5)$ of Theorem~\ref{th:lengthsareequal}). In turn, they  are equal to the length of the simple cycle measured by the harmonic function defined on its exterior (see Theorem~\ref{le:length of pl}).

It is evident from the proof of Proposition~\ref{pr:onethatenclose}, the disjointness  of level curves which correspond to different $g$-values, and the maximum principle that there is a well-defined hierarchy on the set of singular level curves. Each component of a singular level curve, say $g^{-1}(k_m),\  k_m\in{\mathcal K}$, is contained in the interior of a domain whose one component is a unique simple cycle. This cycle belongs to a unique singular curve, say $g^{-1}(k_n), k_n\in {\mathcal K}, k_n>k_m$ (or $E_1$).  Simple cycles corresponding to level curves are either contained in a domain bounded by a simple cycle which is part of a singular level curve, or are parts of singular level curves as is detailed in  Theorem~\ref{th:notsimple}.

We now turn to the construction of $S_{\Omega}$ and the mapping $f$. We start with a straight Euclidean cylinder, $C_{E_1,L_1}$,  of height $k- g(L_1)$ and circumference which is equal to 
\begin{equation}
\label{eq:lengthoftop}
\sum_{x\in E_1}\frac{\bord g}{\bord n}(x).
\end{equation} 
 Since $L_1$ is a generalized bouquet of circles, we can select a finite number of points in the bottom and identify subsets of them in such a way that the quotient is topologically and metrically isomorphic to $L_1$. 

\begin{figure}[h]
    \begin{center}  
      \includegraphics[width=4in]{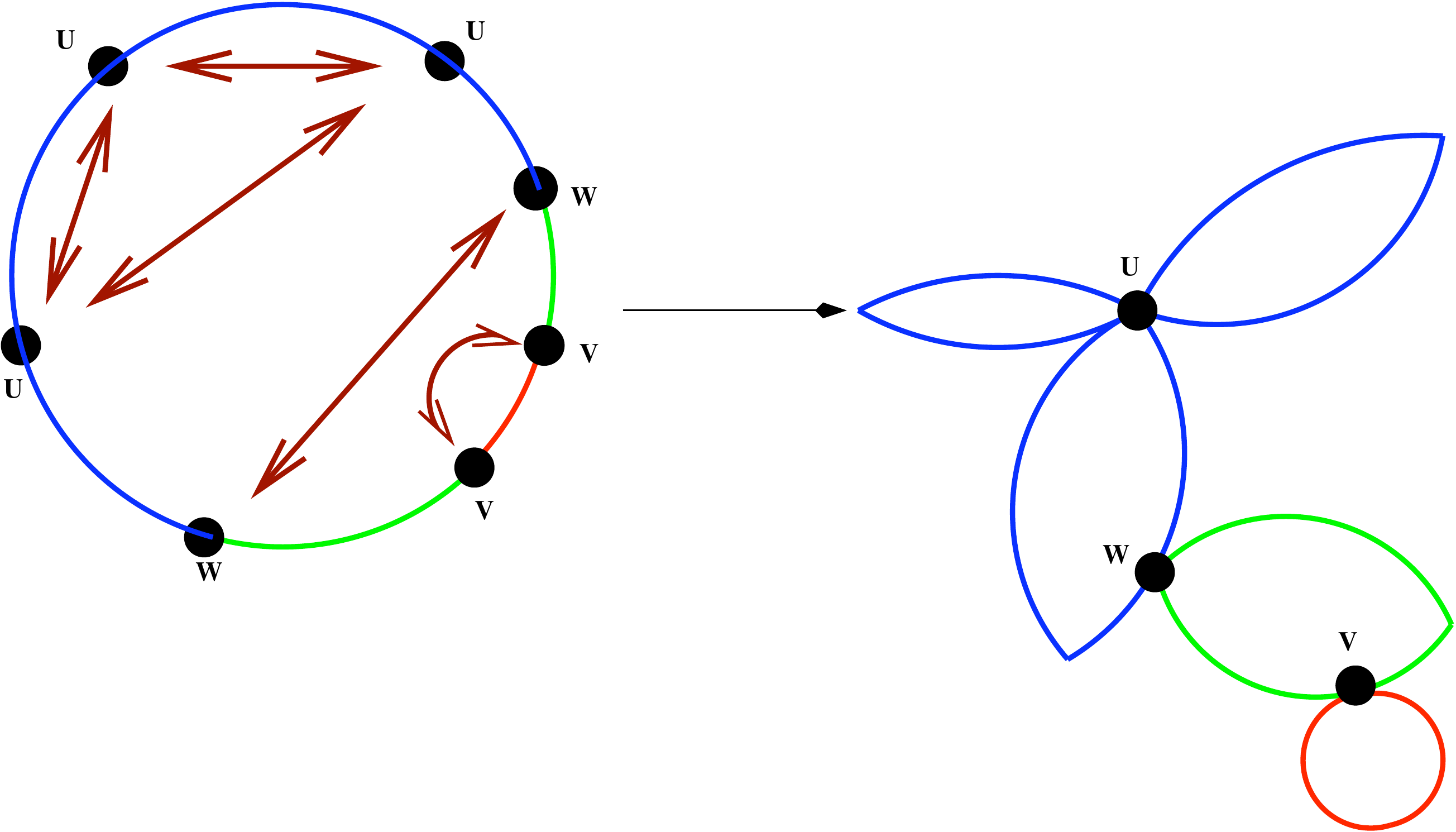}
      \caption{A quotient of a boundary component of a Euclidean cylinder yielding a generalized bouquet of circles}
      \label{fig-Bouqet}
    \end{center}
\end{figure}

As in the proof of Theorem~\ref{Th:pair}, we wish to apply Theorem~\ref{Th:annulus} to $\Omega_1\cup \partial\Omega_1$ ($\partial \Omega_1=E_1\cup L_1$). Since $L_1$ is singular, we may not apply it directly. Instead, we follow the argument carried out in the proof of Theorem~\ref{Th:pair}.  All level curves (which are simple) corresponding to $g(L_1)+\epsilon$, with $\epsilon>0$ being small, will have the same length as $L_1$ (see part $(5)$ of Theorem~\ref{th:lengthsareequal}); hence, as $\epsilon\rightarrow 0$, the sequence of Euclidean cylinders (guaranteed by Theorem~\ref{Th:annulus} applied to the sequence of annuli bounded by these level curves and $E_1$) with their tops being fixed, heights equal to $k- (g(L_1)+\epsilon)$, and circumferences equal to $\sum_{x\in E_1}\frac{\bord g}{\bord n}(x)$, converges to $\tilde C_{E_1,L_1}$ with the top persisting in the limit and the bottom replaced by its quotient as explained above. We denote the mapping given by Theorem~\ref{Th:annulus} and modified as above on the bottom by $f_{\Omega_1}$.

For each $l_i$, $i=1,\ldots k$, there are two cases to consider. First, suppose that $l_i^{\circ}$ does not contain any singular level curve. Without loss of generality, let the unique boundary curve it contains be  $E_2^i$. By assertion $(4)$ of Theorem~\ref{th:lengthsareequal}, we conclude  that the lengths of $E_2^i$ and $l_i$ are equal (measured with the $g_i$-induced flux-gradient metric). Thus, ${\mathcal A}_i=l_i^{\circ}\cup l_i\cup E_2^i$ is an annulus with two (non-singular) boundary components. 
We may therefore apply Theorem~\ref{Th:annulus} and obtain a mapping 
$f_i: {\mathcal A}_i\rightarrow C_{l_i,E_2^i}\  (=S_{{\mathcal A}_{i}})$.
 We now attach the top of the resulting Euclidean cylinder $C_{l_i,E_2^i}$ to $l_i$ by an isometry which is consistent. We conclude that the gluing may be taken as such,  by first arguing that the length of $l_i$ measured with respect to the flux-gradient metric induced by $g_0$,  is the same as its length with respect to the length induced by the flux-gradient metric of $g_i$. This is justified directly by employing Theorem~\ref{le:length of pl} or by using a series of equalities similar to those in Equations~(\ref{eq:length of LI})-(\ref{eq:length of L}) and the reasoning leading to them. The fact that the gluing is consistent follows from the same arguments we have used before.  

Second, assume that $l_i^{\circ}$ contains a singular curve, say $W_i\in{\mathcal W}$. Recall that $W_i$ is the unique singular curve in $l_i^{\circ}$ which encloses all of the boundary curves in  $l_i^{\circ}$. With $l_i$ replacing $E_1$, $W_i$ replacing $L_1$,  and $g_i$ replacing $g_0$, we repeat the construction, yielding a Euclidean cylinder with its bottom being a singular curve as described above (in the case of $L_1$), and a mapping which we will denote by $f_{l_i,{\mathcal W}_i}$. The cylinder  $\tilde C_{l_i,W_i}$ is attached to $\tilde C_{E_1,L_1}$ by gluing its top to the simple cycle corresponding to $l_i$ in the quotient of the bottom of $\tilde C_{E_1,L_1}$, that is, the singular component of $\tilde C_{E_1,L_1}$.

The above analysis is now carried out, at most finitely many times, for each $l_i$, $i=1,\ldots, k$, until we are left with Euclidean cylinders having both of their boundary components non-singular. In particular, the bottom of each cylinder is the image (under the map given by Theorem~\ref{Th:annulus}) of $E_2^{i}$ and its length is equal to
\begin{equation}
\label{eq:lengthofbou}
- \sum_{x\in E_2^i}\frac{\bord g}{\bord n}(x),\  \mbox{\rm for}\  i=1,\cdots, m-1.
\end{equation}
By construction, the images of $E_1$ and $E_2^i$, $i=1,\ldots, m$, under the maps whose construction is described above, are the only boundary components of $S_{\Omega}$. The map $f$ is the obvious union of the collection of mappings constructed above and is analogous to the one appearing in Equation~(\ref{eq:mapping}). It is also evident that $f$ is energy preserving in the sense described before.

We now compute the cone angle, $\phi(v)$, at a singular vertex $v$ with $\mbox{\rm Ind}_g(v)=-n, n\in \NN$. 
By construction, $v$ is the (unique) tangency point of $n+1$ Euclidean cylinders, where $v$ belongs to the non-singular component of each such cylinder. Hence, such a cylinder contributes $\pi$ to the cone angle at $v$. Recall that each such cylinder is attached (by an isometry) to the quotient of the bottom of another Euclidean cylinder. It is easy to check that the contribution to the cone angle at $v$ from this Euclidean cylinder (with one singular boundary component) is equal to $n\times \pi +\pi$.  Therefore, 
\begin{equation}
\label{eq:cone}
\phi(v)=2(n+1)\pi.
\end{equation}
$\hfill\Box$

\bigskip

\begin{Rem}
\label{re:inductionwilldo}
One may also prove Theorem~\ref{Th:ladder} by an induction on the number of boundary components. However, the assertion of Proposition~\ref{pr:onethatenclose} must be used as well as an extension of $g_0$ to $g_1$ over the singular curve $L_1$. Theorem~\ref{le:length of pl} needs to be used in order to prove equality of the $l_1$-length of $L_1$ according to both the $g_0$ and the $g_1$ metric. Overall, we found the proof which does not use induction conceptually more gratifying. We could also stop the process once a planar pair of pants is encountered, thus using Theorem~\ref{Th:pair} directly. Finally, Theorem~\ref{Th:pair} is of course a special case of the theorem above. Still, we maintain that this special case deserves its own proof. 
\end{Rem}

\begin{Rem}
\label{re:valuesofg}
There is a technical difficulty in our construction if some pair of adjacent vertices of ${\mathcal T}^{(0)}$ has the same $g$-value (the first occurrence is in Equation~(\ref{eq:index})).  One may generalize the definitions and the index formula to allow rectangles of area zero, as one solution. 
For a discussion of this approach and others see \cite[Section 5]{Ke}. Experimental evidence  shows that when the cell decomposition is complicated enough, even when the conductance function is identically equal to $1$ and the cells are triangles, such equality rarely happens.
\end{Rem}

\begin{Rem}
\label{re:embedd}
The existence of singular curves for $g$ results in the fact that some rectangles are not embedded in the target. This is evident by the proofs of Theorem~\ref{Th:pair} and Theorem~\ref{Th:ladder}. Since some of the cylinders constructed have a singular boundary component, it is clear that some points in different rectangles that lie on this level curve will map to the same point. However, this occurs only in the situation above and since this fact is not of essential interest to us, we will not go into more details. 
\end{Rem}

 \begin{figure}[h]
 \begin{minipage}[t]{.65\textwidth}
   \begin{center}  
   \includegraphics[width=3.5in]{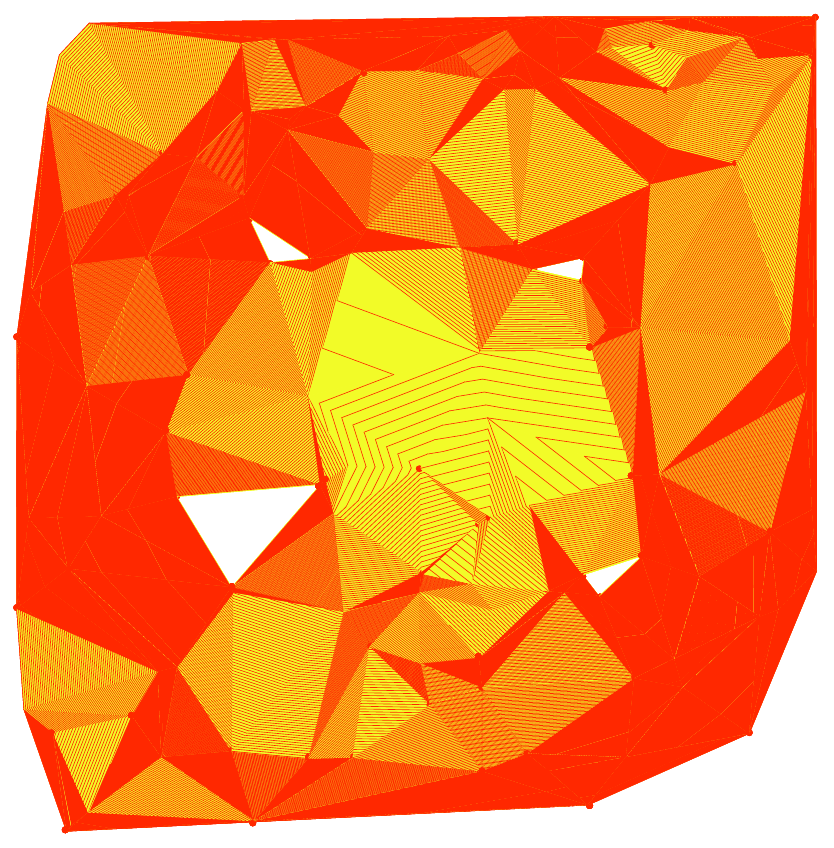}
     \caption{Level curves for a 5 connected domain}
      \label{fig-5levels}
    \end{center}
    \end{minipage}
 \end{figure}

 \newpage

\end{document}